\DeclareFontFamily{U}{mathx}{\hyphenchar\font45}
\DeclareFontShape{U}{mathx}{m}{n}{
      <5> <6> <7> <8> <9> <10>
      <10.95> <12> <14.4> <17.28> <20.74> <24.88>
      mathx10
      }{}
\DeclareSymbolFont{mathx}{U}{mathx}{m}{n}
\DeclareMathAccent{\widecheck}{0}{mathx}{"71}
\newtheorem{theorem}{Theorem}[section]
\newtheorem{lemma}[theorem]{Lemma}
\newtheorem{proposition}[theorem]{Proposition}
\newtheorem{corollary}[theorem]{Corollary}
\newtheorem{conjecture}[theorem]{Conjecture}
\theoremstyle{definition}
\newtheorem{definition}[theorem]{Definition}
\newenvironment{example}
  {\pushQED{\qed}\examplex}
  {\popQED\endexamplex}
\DeclareMathOperator{\Pro}{Pro}
\DeclareMathOperator{\TPro}{TPro}
\DeclareMathOperator{\Acyc}{Acyc}
\DeclareMathOperator{\jdt}{jdt}
\newcommand{\dfn}[1]{\textcolor{blue}{\emph{#1}}}
\title{Toric Promotion}
\author{Colin Defant}
\address{Department of Mathematics, Princeton University, Princeton, NJ 08540, USA}
\email{cdefant@princeton.edu}
\begin{document}

\begin{abstract}
We introduce \emph{toric promotion} as a cyclic analogue of Sch\"utzenberger's promotion operator. Toric promotion acts on the set of labelings of a graph $G$. We discuss connections between toric promotion and previously-studied notions such as toric posets and friends-and-strangers graphs. Our main theorem provides a surprisingly simple description of the orbit structure of toric promotion when $G$ is a forest. 
\end{abstract}

\maketitle

\vspace{-.5cm}

\section{Introduction}\label{Sec:Intro}

\emph{Promotion} is a well-studied operator defined by Sch\"utzenberger \cite{Schutzenberger1, Schutzenberger2, Schutzenberger3} that acts on the linear extensions of a finite poset. Haiman \cite{Haiman} and Malvenuto--Reutenauer \cite{Malvenuto} simplified Sch\"utzenberger's approach by expressing promotion as a composition of local \emph{toggle operators} (also called \emph{Bender--Knuth involutions}). Born out of attempts to better understand the Robinson--Schensted--Knuth correspondence, promotion has grown in prominence through its beautiful connections with other areas \cite{Ayyer, Edelman, HopkinsRubey, Huang, Petersen, Poznanovic, Rhoades} and its multifaceted generalizations \cite{Ayyer, Bernstein, DefantPromotionSorting, Dilks, Dilks2, StanleyPromotion}. Several articles in the field of dynamical algebraic combinatorics have focused on determining the orbit structure of promotion acting on the linear extensions of specific posets. Most of the families of posets for which the orbits of promotion are known to have predictable sizes are discussed in Stanley's survey \cite{StanleyPromotion}; however, Hopkins and Rubey \cite{HopkinsRubey} recently discovered a new such class of posets.  

For our purposes, it will be convenient to define promotion in the context of graphs rather than posets. All graphs in this article are assumed to be simple. Let $G=(V,E)$ be a graph with $n$ vertices. A \dfn{labeling} of $G$ is a bijection $V\to [n]$, where we write $[n]$ for the set $\{1,\ldots,n\}$. Let $\Lambda_G$ be the set of labelings of $G$. Given distinct $i,j\in[n]$, write $(i\,\,j)$ for the transposition in the symmetric group $S_n$ that swaps $i$ and $j$. We define the \dfn{toggle} operator $\tau_{i,j}\colon \Lambda_G\to\Lambda_G$ by 
\begin{equation}\label{EqToggles}
\tau_{i,j}(\sigma)=\begin{cases} (i\,\,j)\circ \sigma, & \mbox{if } \{\sigma^{-1}(i),\sigma^{-1}(j)\}\not\in E; \\ \sigma, & \mbox{if } \{\sigma^{-1}(i),\sigma^{-1}(j)\}\in E. \end{cases}
\end{equation} In other words, $\tau_{i,j}$ swaps the labels $i$ and $j$ if those labels are assigned to nonadjacent vertices and does nothing otherwise. For $i\in[n-1]$, we write $\tau_i$ instead of $\tau_{i,i+1}$. We now define \dfn{promotion} to be the operator $\Pro\colon\Lambda_G\to\Lambda_G$ given by \[\Pro=\tau_{n-1}\tau_{n-2}\cdots\tau_2\tau_1.\] 

Given a labeling $\sigma\in\Lambda_G$, we obtain an acyclic orientation of $G$ by directing each edge from the vertex with the smaller label to the vertex with the larger label. This induces a natural partial order on $V$ in which $u\leq v$ whenever there is a directed path from $u$ to $v$ in the acyclic orientation. The labeling $\sigma$ is a linear extension of this poset, and applying $\Pro$ to $\sigma$ is exactly the same as applying Sch\"utzenberger's promotion to $\sigma$. 

In this article, we introduce \emph{toric promotion} as a cyclic analogue of promotion. This continues the recent line of work aimed at finding cyclic analogues of classical combinatorial notions (see \cite{AdinCyclic} and the references therein). Our use of the word ``toric'' stems from the work of Develin, Macauley, and Reiner \cite{Develin}, who defined \emph{toric posets} as cyclic analogues of posets.
A toric poset of a graph $G$ is defined to be a chamber of a certain toric hyperplane arrangement called the \emph{toric graphical arrangement} of $G$. One of the main results of \cite{Develin} states that toric posets are in bijection with \emph{flip equivalence classes} of $G$, which are equivalence classes of acyclic orientations of $G$ under an equivalence relation generated by source-to-sink and sink-to-source moves called \emph{flips}. These flips have appeared in numerous places under several guises in the literature, most notably in the work of Pretzel \cite{Pretzel}. 

\begin{definition}\label{Def:ToricPro}
Let $G=(V,E)$ be a graph with $n$ vertices. Define \dfn{toric promotion} to be the operator $\TPro\colon\Lambda_G\to\Lambda_G$ given by \[\TPro=\tau_n\tau_{n-1}\cdots\tau_2\tau_1=\tau_n\Pro,\] where we write $\tau_n$ for the toggle operator $\tau_{n,1}$. 
\end{definition}

\begin{example}\label{Exam1}
Let $G$ be a path with $5$ vertices. We can represent a labeling of $G$ as a permutation of the numbers $1,2,3,4,5$ written in one-line notation; two numbers are adjacent in the permutation if and only if the vertices with those labels are adjacent in $G$. Beginning with the labeling $45123$, we compute \[45123\xrightarrow{\tau_1}45123\xrightarrow{\tau_2}45123\xrightarrow{\tau_3}35124\xrightarrow{\tau_4}34125,\] so $\Pro(45123)=34125$. Then $\TPro(45123)=\tau_5(34125)=34521$. 
\end{example}

Each flip equivalence class of $G$ is a union of certain \emph{double-flip equivalence classes}, which were defined by the author and Kravitz in \cite{DefantKravitzFriends} in order to describe the connected components of certain \emph{friends-and-strangers graphs}. In Section~\ref{Sec:Background}, we give a more thorough discussion of toric posets, flip equivalence classes, double-flip equivalence classes, and friends-and-strangers graphs, and we explain how they relate to toric promotion. In particular, we will see that toric promotion restricts to an operator on the set of linear extensions of any fixed double-flip equivalence class $D$, and we will prove that the orbit structure of toric promotion on that set of linear extensions only depends on the flip equivalence class containing $D$. 

Our main theorem gives a remarkably simple description of the orbit structure of toric promotion when $G$ is a forest. Even when $G$ is a path graph, this result is nontrivial and surprising. Indeed, ordinary promotion on the labelings of a path graph has a fairly wild orbit structure, as evidenced by the fact that it has order $3224590642072800=2^5\cdot 3^2\cdot 5^2\cdot 7^2\cdot 11\cdot 13\cdot 19\cdot 23\cdot 37\cdot 59\cdot 67$ when the path has just $7$ vertices. 

\begin{theorem}\label{Thm:Forest}
Let $G$ be a forest with $n\geq 2$ vertices. Let $\sigma\in\Lambda_G$ be a labeling, and let $t$ be the number of vertices in the connected component of $G$ containing $\sigma^{-1}(1)$. Then the size of the orbit of $\sigma$ under toric promotion is \[(n-1)\frac{t}{\gcd(t,n)}.\] In particular, if $G$ is a tree, then every orbit of $\TPro\colon\Lambda_G\to\Lambda_G$ has size $n-1$. 
\end{theorem}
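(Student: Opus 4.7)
The plan is to analyze $\TPro$ via the acyclic orientation induced by each labeling, then handle the tree case carefully before extending to forests.

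First, I would observe that each toggle $\tau_i$ with $i \in [n-1]$ preserves the acyclic orientation on $G$ induced by $\sigma$ (each edge oriented from its smaller-labeled endpoint to its larger-labeled one): if $\tau_i$ swaps labels $i, i+1$ at non-adjacent vertices $u, v$, then no neighbor of $u$ or $v$ carries a label strictly between $i$ and $i+1$, so the orientation of every edge incident to $u$ or $v$ is unchanged. Hence $\Pro$ preserves this orientation. The toggle $\tau_n = \tau_{n,1}$, when it does swap, exchanges labels $1$ and $n$---sitting at a source and a sink of the orientation---and thereby performs a double flip. So $\TPro$ permutes the labelings within a single double-flip equivalence class, as anticipated by the paper's earlier discussion.

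A finer analysis pins down $\Pro$ itself. Let $v_0 = \sigma^{-1}(1)$, and let $m = \min\{\sigma(w) : w \text{ adjacent to } v_0\}$, with the convention $m = n+1$ if $v_0$ is isolated. An induction on the toggles $\tau_1, \tau_2, \ldots$ shows that the label at $v_0$ after $\Pro$ is $m - 1$, because the swaps that raise it block at $\tau_{m-1}$ due to the neighbor carrying label $m$; meanwhile label $1$ itself migrates to $\sigma^{-1}(2)$ whenever $m \geq 3$. Combined with the orientation analysis, this yields a concrete description of $\TPro(\sigma)$ in terms of $\sigma$.

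The heart of the proof---and the main obstacle---is the tree case: showing $\TPro^{n-1} = \mathrm{id}$ on $\Lambda_G$ for every tree $G$. My plan is to produce an equivariant bijection from each orbit of $\TPro$ to an auxiliary set of size $n-1$ on which $\mathbb{Z}/(n-1)$ acts transitively by a one-step rotation. A natural candidate is built from the edges of $G$, or more generally a flag-like structure attached to the sequence of orientations traced by the orbit through the double-flip graph; in worked small examples these orientation sequences form palindromic walks of length $n-1$. Identifying the correct cyclic structure and verifying equivariance is where most of the work lies, since the bijection must encode both the promotion dynamics inside a fixed orientation and the intermittent double flips caused by $\tau_n$ between orientations.

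For a general forest $G$, I would extend the tree case by a component-wise analysis. A preliminary lemma shows that at the end of every application of $\TPro$, label $1$ lies in the same connected component $C$ as $\sigma^{-1}(1)$ (even though it may temporarily leave $C$ via $\tau_1$ and be returned by $\tau_n$); hence the component size $t$ is an orbit invariant. I would then show that $n-1$ applications of $\TPro$ act as the identity on every component other than $C$, while on $C$ they realize the cyclic shift by $n \bmod t$ in the natural cyclic order of the $t$ labels currently occupying $C$. This shift has order $t/\gcd(t,n)$, so the orbit closes after $(n-1) \cdot t/\gcd(t,n)$ applications of $\TPro$. When $G$ is a tree, $t = n$ and the shift is trivial, recovering orbit size $n-1$.
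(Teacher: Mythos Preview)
Your proposal has a genuine gap at its core: the tree case. You identify ``producing an equivariant bijection from each orbit of $\TPro$ to an auxiliary set of size $n-1$'' as the heart of the argument, but you do not construct this bijection---you only gesture at candidates (``built from the edges of $G$'' or ``a flag-like structure'') and concede that ``identifying the correct cyclic structure and verifying equivariance is where most of the work lies.'' That is a hope, not a proof. The paper takes a completely different route that you are missing: it first establishes the algebraic identity $\TPro^{n-1}=(c\Pro)^n$ (more generally $\TPro^{(n-1)s+k}=\tau_n\cdots\tau_{n-k+1}\,c^{-k}(c\Pro)^{ns+k}$), then shows that on a forest each power $(c\Pro)^\ell$ acts by sliding the label $1$ along a single path in $G$ (a jeu-de-taquin description). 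The tree identity $\TPro^{n-1}=\mathrm{id}$ is then obtained by a contradiction argument that tracks signed winding numbers of vertices as the toggles move them around a circle of $n$ positions; connectivity of $G$ forces these winding numbers to vanish. None of this structure appears in your outline.

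There is a second gap in the forest extension. Even granting your tree case and your claim about $\TPro^{n-1}$, you would only have computed the order of $\TPro^{n-1}$ on the orbit, namely $t/\gcd(t,n)$. To conclude that the orbit size under $\TPro$ itself equals $(n-1)\cdot t/\gcd(t,n)$ rather than a proper divisor, you must also show that every orbit size is a multiple of $n-1$; the paper proves this separately (Proposition~\ref{Prop1}), again via the $(c\Pro)$ factorization and the path-sliding description, and your outline does not address it. Finally, your assertion that $\TPro^{n-1}$ acts on the component $C$ as ``the cyclic shift by $n\bmod t$ in the natural cyclic order of the $t$ labels currently occupying $C$'' is not well defined (which cyclic order?) and does not match what actually happens: in the paper $\TPro^{n-1}=(c\Pro)^n$ restricts to $C$ as the relabeled operator $(c_T\Pro_T)^n$ on the tree $T$, whose orbits have size $t$ by the tree corollary---but this operator is not a literal cyclic shift of labels.
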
  

\begin{example}
Let $G$ be the path on $4$ vertices. The action of toric promotion on $\Lambda_G$ is depicted in the following diagram: \[\includegraphics[width=\linewidth]{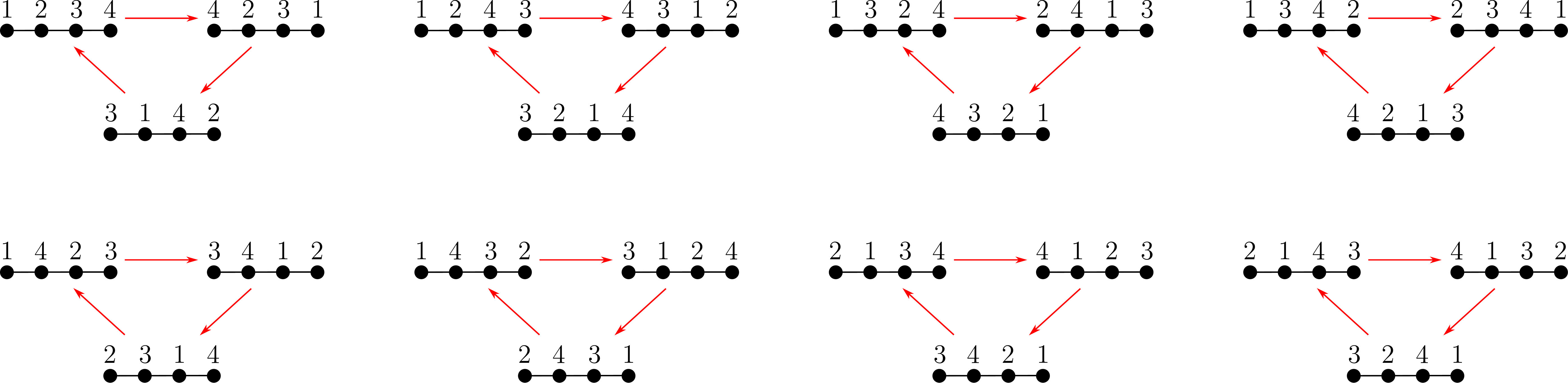}\]
\end{example}

\section{Toric Posets, Flips, and Friends-and-Strangers Graphs}\label{Sec:Background}

\subsection{Acyclic Orientations}
As before, let $G=(V,E)$ be a graph with $n$ vertices. An \dfn{acyclic orientation} of $G$ is a directed graph with no directed cycles that is obtained by assigning a direction to each edge of $G$. We write $\Acyc(G)$ for the set of acyclic orientations of $G$. A labeling $\sigma\in\Lambda_G$ induces an acyclic orientation $\alpha_\sigma$ of $G$ by orienting each edge from the vertex with the smaller label to the vertex with the larger label. A \dfn{linear extension} of an acyclic orientation $\alpha$ is a labeling $\sigma\in\Lambda_G$ such that $\alpha_\sigma=\alpha$. More generally, if $A\subseteq\Acyc(G)$, then we define a linear extension of $A$ to be a labeling $\sigma\in\Lambda_G$ such that $\alpha_\sigma\in A$; we write $\mathcal L(A)$ for the set of linear extensions of $A$. 

A \dfn{source} of a directed graph is a vertex of in-degree $0$, while a \dfn{sink} is a vertex of out-degree $0$. If $u\in V$ is a source in an acyclic orientation of $G$, then one can reverse the directions of all of the edges incident to $u$ to obtain another acyclic orientation in which $u$ is a sink. Similarly, if $v\in V$ is a sink, then reversing the orientations of the edges incident to $v$ produces another acyclic orientation in which $v$ is a source. (See Figure~\ref{Fig2}.) We call these source-to-sink and sink-to-source moves \dfn{flips}. Two acyclic orientations $\alpha,\alpha'\in\Acyc(G)$ are \dfn{flip equivalent} if $\alpha'$ can be obtained from $\alpha$ via a sequence of flips.  

\begin{figure}[ht]
\begin{center}
\includegraphics[height=1.5cm]{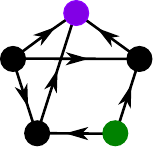} \qquad\qquad \includegraphics[height=1.5cm]{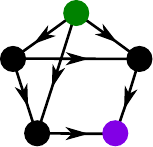}
\caption{The acyclic orientation on the right is obtained from the one on the left by performing two flips. One flip transforms the purple sink on the left into the green source on the right; the other flip transforms the green source on the left into the purple sink on the right. When performed simultaneously, these two flips constitute a double flip.}
\label{Fig2}
\end{center}  
\end{figure}

Identifying $V$ with $[n]$, we can consider the \dfn{graphical arrangement} $\mathcal A(G)$, which is the hyperplane arrangement in $\mathbb R^n$ consisting of the hyperplanes $\{(x_1,\ldots,x_n)\in\mathbb R^n:x_i=x_j\}$ for all $\{i,j\}\in E$. A \dfn{chamber} of $\mathcal A(G)$ is a connected component of $\mathbb R^n\setminus \mathcal A(G)$. It is well known that the chambers of $\mathcal A(G)$ correspond bijectively to the acyclic orientations of $G$. Develin, Macauley, and Reiner \cite{Develin} considered the \dfn{toric graphical arrangement} $\mathcal A_{\text{tor}}(G)=\pi(\mathcal A(G))\subseteq\mathbb R^n/\mathbb Z^n$, where $\pi\colon\mathbb R^n\to\mathbb R^n/\mathbb Z^n$ is the natural projection map. They defined a \dfn{toric poset} of $G$ to be a chamber (i.e., a connected component of the complement) of $\mathcal A_{\text{tor}}(G)$, and they proved that toric posets of $G$ correspond bijectively to flip equivalence classes of $\Acyc(G)$.

\subsection{Friends-and-Strangers Graphs}

Suppose $X=(V(X),E(X))$ and $Y=(V(Y),E(Y))$ are two graphs, each of which has $n$ vertices. Following \cite{DefantKravitzFriends}, we consider the \dfn{friends-and-strangers graph} of $X$ and $Y$, denoted $\mathsf{FS}(X,Y)$, which is a graph whose vertices are the bijections $\sigma\colon V(X)\to V(Y)$. Two bijections $\sigma$ and $\sigma'$ are adjacent in $\mathsf{FS}(X,Y)$ if and only if there exists an edge $\{u,v\}\in E(X)$ such that $\{\sigma(u),\sigma(v)\}\in E(Y)$, $\sigma(u)=\sigma'(v)$, $\sigma(v)=\sigma'(u)$, and $\sigma(z)=\sigma'(z)$ for all $z\in V(X)\setminus\{u,v\}$. It is straightforward to check that the map sending each bijection $V(X)\to V(Y)$ to its inverse is an isomorphism from $\mathsf{FS}(X,Y)$ to $\mathsf{FS}(Y,X)$.

\begin{example}\label{Exam2}
Suppose \[X=\begin{array}{l}\includegraphics[height=.659cm]{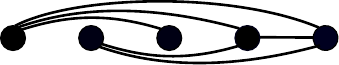}\end{array}\quad\text{and}\quad Y=\begin{array}{l}\includegraphics[height=1.5cm]{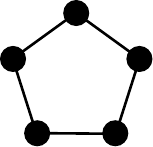}.\end{array}\]  Let the vertices of $X$ be $x_1,x_2,x_3,x_4,x_5$, listed from left to right, and let the vertices of $Y$ be $1,2,3,4,5$, listed clockwise. Each bijection $\sigma\colon V(X)\to V(Y)$ can be represented by the permutation $\sigma(x_1)\sigma(x_2)\sigma(x_3)\sigma(x_4)\sigma(x_5)$. The friends-and-strangers graph $\mathsf{FS}(X,Y)$ is shown in Figure~\ref{Fig3}. This graph has $5$ pairwise-isomorphic connected components $H_{D_1},H_{D_2},H_{D_3},H_{D_4},H_{D_5}$ (we will explain the notation shortly). Observe that $52314$ and $51324$ are adjacent in $\mathsf{FS}(X,Y)$; this corresponds to the fact that $\{x_2,x_4\}\in E(X)$ and $\{1,2\}\in E(Y)$. 
\end{example}

\begin{figure}[ht]
\begin{center}
\includegraphics[width=\linewidth]{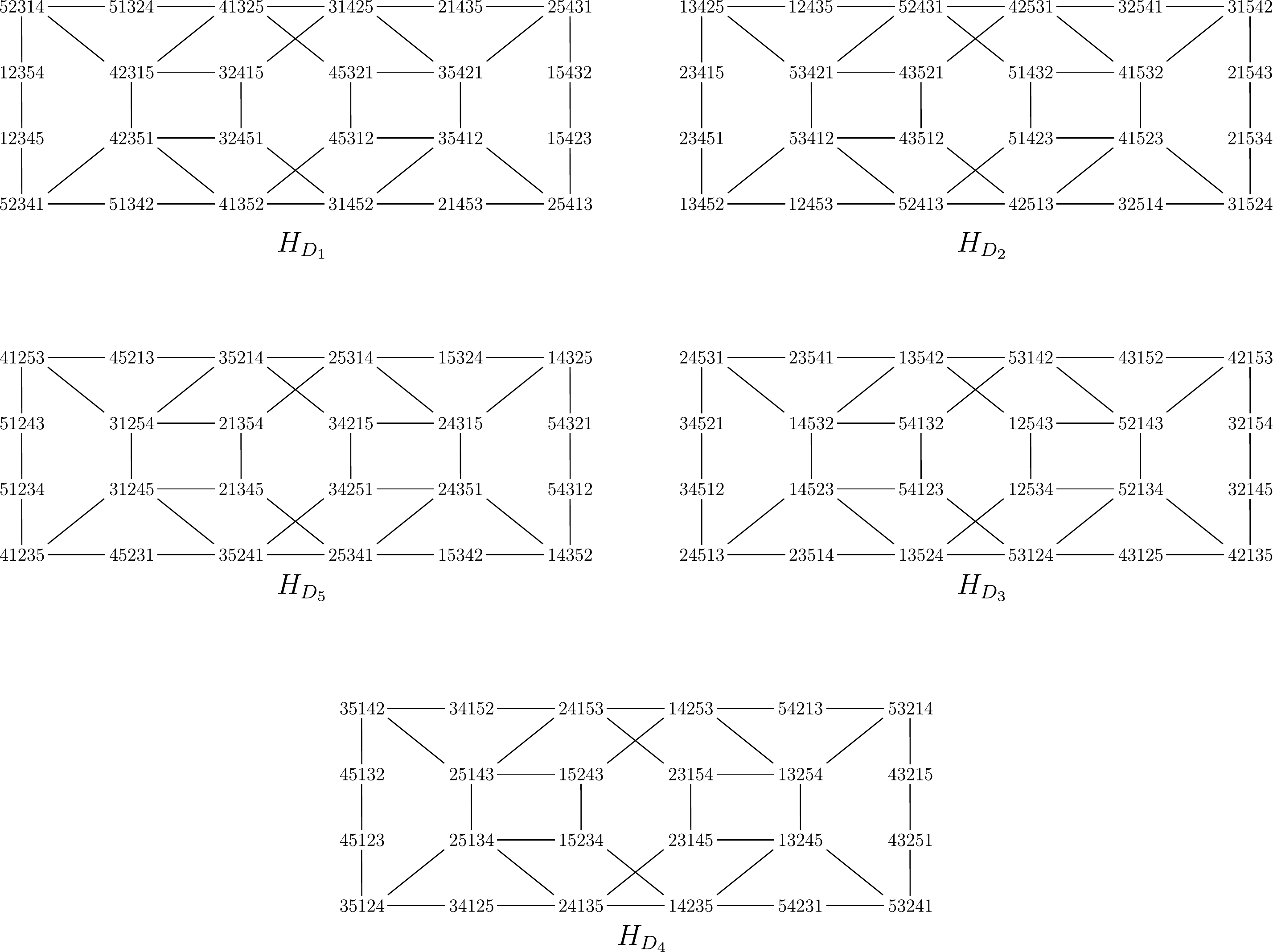} 
\caption{The friends-and-strangers graph $\mathsf{FS}(X,Y)$ from Example~\ref{Exam2}.}
\label{Fig3}
\end{center}  
\end{figure}

In this article, we are primarily interested in the case when $Y$ is the cycle graph $\mathsf{Cycle}_n$ with vertex set $V(\mathsf{Cycle}_n)=[n]$ and edge set $E(\mathsf{Cycle}_n)=\{\{i,i+1\}:i\in[n-1]\}\cup\{\{n,1\}\}$. Let $G=(V,E)$ be a graph with $n$ vertices, and let $X=\overline G$ be the complement of $G$; in other words, $X$ has the same vertex set as $G$, and two distinct vertices are adjacent in $X$ if and only if they are not adjacent in $G$. The vertices of $\mathsf{FS}(\overline G,\mathsf{Cycle}_n)$ are bijections $V\to [n]$. One readily checks that two bijections $\sigma,\sigma'\colon V\to[n]$ lie in the same connected component of $\mathsf{FS}(\overline G,\mathsf{Cycle}_n)$ if and only if there is a sequence $i_1,\ldots,i_r$ of indices in $[n]$ such that $\sigma'=(\tau_{i_r}\cdots\tau_{i_1})(\sigma)$ (with the toggle operators $\tau_{i_j}$ defined as in \eqref{EqToggles}). In particular, the toric promotion operator $\TPro\colon\Lambda_G\to\Lambda_G$ restricts to an operator on each connected component of $\mathsf{FS}(\overline G,\mathsf{Cycle}_n)$. 

The article \cite{DefantKravitzFriends} characterizes the connected components of $\mathsf{FS}(\overline G,\mathsf{Cycle}_n)$; in order to state this characterization, we need to define another equivalence relation on $\Acyc(G)$ that refines flip equivalence. Suppose $\alpha\in\Acyc(G)$ has a source $u$ and a sink $v$ such that $u\neq v$ and such that $u$ and $v$ are not adjacent in $G$ (i.e., $\{u,v\}\in E(\overline G)$). We can simultaneously flip $u$ into a sink and flip $v$ into a source; such a move is called a \dfn{double flip}. We say that two acyclic orientations of $G$ are \dfn{double-flip equivalent} if one can be obtained from the other via a sequence of double flips. For example, if $G=\overline X$, where $X$ is as in Example~\ref{Exam2}, then the $5$ double-flip equivalence classes of $\Acyc(G)$ are shown in Figure~\ref{Fig4}.

\begin{figure}[ht]
\begin{center}
\includegraphics[width=.95\linewidth]{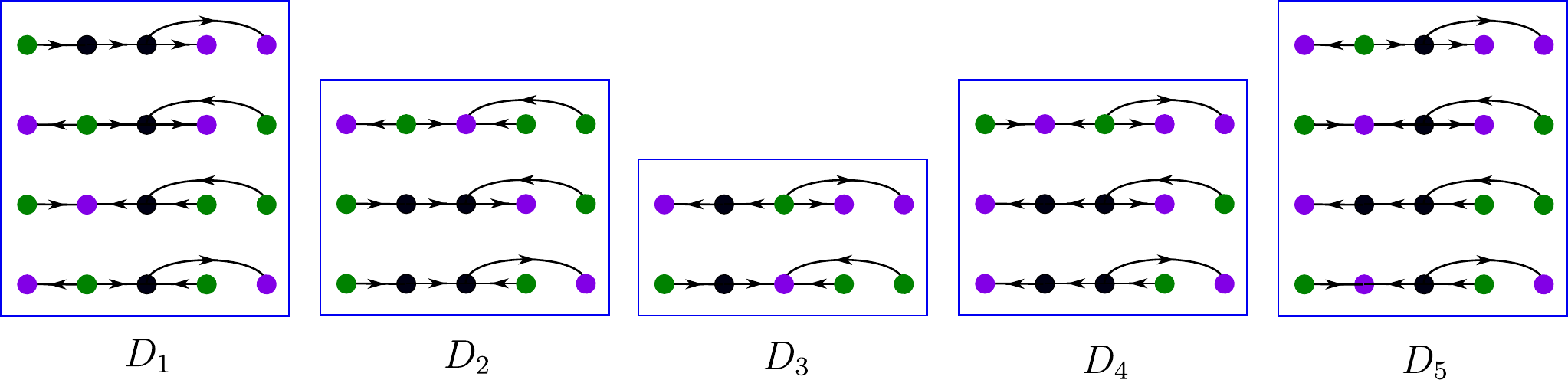} 
\caption{The graph $G=\overline X$, with $X$ as in Example~\ref{Exam2}, has $16$ acyclic orientations. Each blue box encloses one double-flip equivalence class. We have colored sinks in purple and colored sources in green.}
\label{Fig4}
\end{center}  
\end{figure}

The following theorems were proven in \cite[Section~4]{DefantKravitzFriends} for $\mathsf{FS}(\mathsf{Cycle}_n,\overline G)$, but we have stated them for $\mathsf{FS}(\overline G,\mathsf{Cycle}_n)$ instead. It is easy to transfer information back and forth between these friends-and-strangers graphs using the aforementioned isomorphism between them. 

\begin{theorem}[\cite{DefantKravitzFriends}]\label{ThmFriends1}
Let $G=(V,E)$ be a graph with $n$ vertices. Let $n_1,\ldots,n_r$ be the sizes of the vertex sets of the connected components of $G$, and let $\nu=\gcd(n_1,\ldots,n_r)$. Every flip equivalence class of $\Acyc(G)$ is a union of exactly $\nu$ double-flip equivalence classes of $\Acyc(G)$. Two bijections $V\to[n]$ lie in the same connected component of $\mathsf{FS}(\overline G,\mathsf{Cycle}_n)$ if and only if they are linear extensions of the same double-flip equivalence class of $\Acyc(G)$.  
\end{theorem}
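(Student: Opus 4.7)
My plan is to treat the two assertions in reverse of the stated order: first the characterization of $\mathsf{FS}$-components, and then the counting statement, which I will build on it.

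For the characterization, the key is to analyze what each toggle does to the induced orientation $\alpha_\sigma$. Inspection of \eqref{EqToggles} shows that, for $i \in [n-1]$, a non-trivial $\tau_i$ swaps consecutive labels $i$ and $i+1$ between vertices $u, v$ that are non-adjacent in $G$; for any third label $\sigma(w) \notin \{i, i+1\}$, the inequality between $\sigma(w)$ and $\sigma(u)$ agrees with that between $\sigma(w)$ and $\sigma'(u)$, so no edge orientation changes and $\alpha_{\tau_i(\sigma)} = \alpha_\sigma$. On the other hand, a non-trivial $\tau_n$ swaps labels $1$ and $n$ at the source $\sigma^{-1}(1)$ and sink $\sigma^{-1}(n)$ of $\alpha_\sigma$, which are non-adjacent in $G$ by hypothesis; this simultaneously reverses every edge incident to each of those two vertices, which is precisely a double flip interchanging source and sink. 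Hence every toggle sequence descends to a double-flip sequence, giving the ``only if'' direction.

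For the converse I would combine two sub-lemmas. First, the set of linear extensions of any fixed $\alpha \in \Acyc(G)$ is connected under the toggles $\tau_1, \ldots, \tau_{n-1}$; this is the standard connectivity of linear extensions of a poset under adjacent transpositions of incomparable elements, which in our setting are precisely the non-trivial $\tau_i$'s. Second, if $\alpha'$ is obtained from $\alpha$ by a single double flip at a source $u$ and sink $v$, then $\alpha$ admits a linear extension $\sigma$ with $\sigma(u) = 1$ and $\sigma(v) = n$ (place $u$ first and $v$ last, extending arbitrarily in between), and the computation above shows that $\tau_n(\sigma)$ is a linear extension of $\alpha'$. Chaining these two sub-lemmas along any double-flip sequence produces a toggle path connecting any linear extension of $\alpha$ to any linear extension of $\alpha'$.

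For the counting statement, one inclusion is immediate: a double flip is a composition of two single flips, so every flip class decomposes as a disjoint union of double-flip classes. To show this union has exactly $\nu$ pieces, my plan is to study the cyclic shift $\Sigma \colon \Lambda_G \to \Lambda_G$ defined by taking $\Sigma(\sigma)(v)$ to be the representative in $[n]$ of $\sigma(v)+1 \pmod n$. A direct check shows that $\alpha_{\Sigma(\sigma)}$ is obtained from $\alpha_\sigma$ by reversing every edge at the unique sink $\sigma^{-1}(n)$, so each $\Sigma$-orbit has size $n$ and lies in a single flip class. Moreover, the identity $\tau_j \circ \Sigma = \Sigma \circ \tau_{j-1}$ for $j \in [n]$, with the convention $\tau_0 := \tau_n$, shows that $\Sigma$ is a graph automorphism of $\mathsf{FS}(\overline G, \mathsf{Cycle}_n)$, so $\Sigma$ permutes connected components. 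By the characterization already proved, the counting reduces to showing that every $\Sigma$-orbit visits exactly $\nu$ $\mathsf{FS}$-components, each component being visited $n/\nu$ times; a double count over orbits inside a fixed flip class then forces the conclusion.

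The main obstacle is the claim that $\sigma$ and $\Sigma^k(\sigma)$ lie in the same $\mathsf{FS}$-component if and only if $\nu \mid k$. For the ``if'' direction I would use $\nu \mid n_j$ for every $j$ to construct, for each $\sigma$, an explicit toggle path realizing $\Sigma^\nu$: the $\nu$ consecutive sink-flips implemented by $\Sigma^\nu$ at the orientation level should admit a grouping into legitimate double flips after appropriate intra-component relabeling via the toggles $\tau_1, \ldots, \tau_{n-1}$. For the ``only if'' direction I need a $\mathbb{Z}/\nu$-valued invariant on $\mathsf{FS}$-components whose value advances by $1$ under $\Sigma$; the natural place to look is the vector of component label sums $F_j(\sigma) = \sum_{v \in C_j}\sigma(v)$, whose changes under non-trivial toggles (by $\pm 1$ or $\pm(n-1)$ between two components) and under $\Sigma$ (by a multiple of $n_j$ with a wrap-around correction) are tightly controlled by the divisibilities $\nu \mid n_j$ and $\nu \mid n$. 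Extracting from this data a clean invariant that is both toggle-stable and $\Sigma$-advancing is where the bulk of the technical work lies.
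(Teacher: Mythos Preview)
This theorem is not proved in the present paper; it is quoted from \cite{DefantKravitzFriends} (see the sentence immediately preceding its statement), so there is no in-paper proof to compare against. What follows is an assessment of your sketch on its own terms. Note that your $\Sigma$ is the operator the paper calls $c$, and your commutation relation is Lemma~\ref{Lem:EasyRelation}.

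Your treatment of the characterization of $\mathsf{FS}$-components is correct and essentially complete: the analysis of how each $\tau_i$ acts on $\alpha_\sigma$ is right, and the two sub-lemmas (connectivity of linear extensions of a fixed $\alpha$ under $\tau_1,\ldots,\tau_{n-1}$, and realizability of a prescribed double flip via $\tau_n$ after adjusting the labeling) combine to give both directions cleanly.

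The counting argument has two genuine gaps. First, even granting that every $\Sigma$-orbit of labelings meets exactly $\nu$ components, this does not by itself determine the number of components in a flip class; you also need that the induced $\langle\Sigma\rangle$-action on double-flip classes within a fixed flip class is \emph{transitive}. This is true---any single sink-flip at $v$ can be realized by first using $\tau_1,\ldots,\tau_{n-1}$ to reach a linear extension with $\sigma(v)=n$ and then applying $\Sigma$, so iterating reaches every orientation in the flip class---but the phrase ``a double count over orbits'' does not supply that argument. Second, and more substantively, the toggle-stable, $\Sigma$-advancing $\mathbb{Z}/\nu$-invariant is never constructed, and the candidate you propose cannot work. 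The quantities $F_j(\sigma)=\sum_{v\in C_j}\sigma(v)$ change by $\pm 1$ under any cross-component toggle $\tau_i$ with $i<n$, so any $\mathbb{Z}/\nu$-linear combination $\sum_j a_j F_j$ that is stable under all such toggles must have all $a_j$ congruent modulo $\nu$; it is then a fixed multiple of $\binom{n+1}{2}$, hence constant on $\Lambda_G$, and in particular cannot advance under $\Sigma$. Thus no linear combination of the $F_j$ can serve as the invariant, and the proposal gives no indication of what nonlinear construction would replace it. This is precisely the step you flag as ``where the bulk of the technical work lies,'' and as written it remains open.
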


For each double-flip equivalence class $D$ in $\Acyc(G)$, let us write $H_D$ for the connected component of $\mathsf{FS}(\overline G,\mathsf{Cycle}_n)$ whose vertex set is $\mathcal L(D)$. Let $c$ denote the long cycle $(1\,\,2\,\,3\cdots n)$ in the symmetric group $S_n$. Abusing notation, we will also write $c$ for the operator on $\Lambda_G$ given by $c(\sigma)=c\circ\sigma$. Thus, $c$ acts on a labeling by cyclically permuting the labels.  

\begin{theorem}[\cite{DefantKravitzFriends}]\label{ThmFriends2}
Let $G=(V,E)$ be a graph with $n$ vertices. Let $n_1,\ldots,n_r$ be the sizes of the vertex sets of the connected components of $G$, and let $\nu=\gcd(n_1,\ldots,n_r)$. If $C$ is a flip equivalence class of $\Acyc(G)$, then there is an ordering $D_1,\ldots,D_\nu$ of the double-flip equivalence classes contained in $C$ such that $c$ is an isomorphism from $H_{D_i}$ to $H_{D_{i+1}}$ for all $i$ (with indices taken modulo $\nu$). 
\end{theorem}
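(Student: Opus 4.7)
My plan is to prove the theorem in three stages: first showing that $c$ acts as a graph automorphism of $\mathsf{FS}(\overline G,\mathsf{Cycle}_n)$ that preserves flip classes, then constructing a $\mathbb{Z}/\nu\mathbb{Z}$-valued invariant distinguishing the $\nu$ double-flip classes inside a single flip class, and finally using that invariant to force the required cyclic ordering.

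The starting point is the conjugation identity $c\tau_i c^{-1}=\tau_{(i\bmod n)+1}$ for $i\in[n]$, which follows from a direct calculation: $c^{-1}$ shifts every label down by $1$ modulo $n$, $\tau_i$ then swaps the labels that are now $i$ and $i+1$ at the corresponding vertices (when those vertices are non-adjacent in $G$), and finally $c$ shifts all labels back up. Because the edges of $\mathsf{FS}(\overline G,\mathsf{Cycle}_n)$ are exactly the non-trivial applications of $\tau_1,\ldots,\tau_n$, this identity shows that $c$ is a graph automorphism of $\mathsf{FS}(\overline G,\mathsf{Cycle}_n)$; hence $c$ permutes the connected components and restricts to an isomorphism on each $H_D$. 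Let $\phi$ be the induced permutation of double-flip classes, so that $c(H_D)=H_{\phi(D)}$. Comparing $\sigma$ to $c\sigma$ shows that $\alpha_{c\sigma}$ differs from $\alpha_\sigma$ only in the edges incident to $\sigma^{-1}(n)$, which is a sink of $\alpha_\sigma$ and becomes a source in $\alpha_{c\sigma}$; that is, $c$ induces a single sink-to-source flip on orientations, so $\phi$ preserves each flip class $C$.

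The main work is to show that $\phi|_C$ is a single $\nu$-cycle. I would construct an invariant $J\colon C\to\mathbb{Z}/\nu\mathbb{Z}$ that is constant on each double-flip class and satisfies $J(\alpha')\equiv J(\alpha)+1\pmod{\nu}$ whenever $\alpha'$ is obtained from $\alpha$ by a single sink-to-source flip. Fix a reference $\alpha_0\in C$; for each $\alpha\in C$, choose a flip path $\alpha_0\to\alpha$ and let $\chi(\alpha)$ be the signed count $(\#\text{sink-to-source flips})-(\#\text{source-to-sink flips})$ along that path. A double flip contributes $-1+1=0$ to $\chi$, so $J:=\chi\bmod\nu$ is automatically invariant under double flips. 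The main obstacle is well-definedness: every flip loop $\alpha_0\to\cdots\to\alpha_0$ must have $\chi\equiv 0\pmod{\nu}$. I would reduce this to the single-component claim that in any connected graph $K$ on $m$ vertices, every flip loop in $\Acyc(K)$ satisfies $\chi\equiv 0\pmod m$---since restricting a loop of $G$ to each component $K_j$ then yields a loop with $\chi\equiv 0\pmod{n_j}$ for every $j$, so that $\chi\equiv 0\pmod{\gcd(n_1,\ldots,n_r)}=\pmod{\nu}$ on the original loop. The single-component claim can be proved via Pretzel's structural description of flip equivalence, or by exploiting that on $\Acyc(K)$ the $m$-fold composition $c^m$ furnishes a canonical flip loop with $\chi=m$ and then showing (for instance by induction on loop length, using that flips at non-adjacent source/sink pairs commute and that trivial ``flip $v$ and flip $v$ back'' loops have $\chi=0$) that every flip loop is equivalent to a power of this canonical one modulo relations with $\chi\equiv 0\pmod m$.

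Once $J$ is well-defined, the conclusion is quick. Since each application of $c$ is a single sink-to-source flip, $J(\phi(D))\equiv J(D)+1\pmod\nu$ for every double-flip class $D\subseteq C$; iterating, the image of $J$ on $\{D:D\subseteq C\}$ is closed under adding $1$ and hence equals all of $\mathbb{Z}/\nu\mathbb{Z}$. Theorem~\ref{ThmFriends1} guarantees exactly $\nu$ double-flip classes in $C$, so $D\mapsto J(D)$ is bijective and $\phi|_C$ is conjugate to the cyclic shift on $\mathbb{Z}/\nu\mathbb{Z}$; in particular it is a single $\nu$-cycle. Labeling the classes $D_1,\ldots,D_\nu$ in the cyclic order dictated by $J$ produces the ordering required by the theorem.
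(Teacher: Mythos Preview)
The paper does not prove this theorem; it is quoted from \cite{DefantKravitzFriends} without argument, so there is no proof in the paper to compare against. Evaluating your proposal on its own merits: the overall architecture is correct. That $c$ is a graph automorphism of $\mathsf{FS}(\overline G,\mathsf{Cycle}_n)$ follows exactly as you say from the conjugation relation (this is Lemma~\ref{Lem:EasyRelation}), and the observation that $\alpha_{c\sigma}$ is obtained from $\alpha_\sigma$ by a single sink-to-source flip at $\sigma^{-1}(n)$ is right. The final step---using Theorem~\ref{ThmFriends1} together with a $\mathbb{Z}/\nu\mathbb{Z}$-valued invariant $J$ that increments under $c$ to force $\phi|_C$ to be a $\nu$-cycle---is also valid once $J$ exists.

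The genuine gap is the well-definedness of $J$, i.e., the claim that in a connected $m$-vertex graph $K$ every flip loop has signed flip count $\chi\equiv 0\pmod m$. Neither of your proposed justifications is an argument: the appeal to ``Pretzel's structural description'' is a pointer, not a proof, and the sketch about reducing arbitrary loops to powers of a canonical $c_K^m$-loop via commutation moves and trivial back-and-forth cancellations presupposes a presentation of the fundamental groupoid of the flip graph that you have not established. Here is a short direct proof that fills the hole. Given a flip loop in $\Acyc(K)$, let $w(v)$ be the net number of sink-to-source minus source-to-sink flips performed at $v$, so $\chi=\sum_v w(v)$. Fix an edge $\{u,v\}$ and track its orientation around the loop: it returns to its starting direction, and only flips of $u$ or $v$ reverse it. A four-case check shows that the signed number of reversals (each change from $u\to v$ to $v\to u$ counted $+1$, the opposite $-1$) equals $w(v)-w(u)$; since the edge ends where it started, this signed count is $0$, so $w(u)=w(v)$. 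Connectedness of $K$ forces $w$ to be constant, hence $\chi=m\cdot w\equiv 0\pmod m$. With this lemma in place, your construction of $J$ and the remainder of your argument go through.
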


\begin{example}
Let $G=\overline X$, where $X$ is the graph from Example~\ref{Exam2}. Since $G$ is a connected graph with $5$ vertices, we have $\nu=5$. There is a unique flip equivalence class of $\Acyc(G)$. As predicted by Theorem~\ref{ThmFriends1}, this flip equivalence class is a union of $5$ double-flip equivalence classes. In Figure~\ref{Fig4}, we have labeled these double-flip equivalence classes $D_1,D_2,D_3,D_4,D_5$. The corresponding connected components $H_{D_1},H_{D_2},H_{D_3},H_{D_4},H_{D_5}$ of $\mathsf{FS}(X,\mathsf{Cycle}_5)$ are as depicted in Figure~\ref{Fig3}. One can check that $c$ is an isomorphism from $H_{D_i}$ to $H_{D_{i+1}}$ for each $i$ (modulo $5$), as predicted by Theorem~\ref{ThmFriends2}. 
\end{example}

Theorem~\ref{ThmFriends1} tells us that toric promotion on $\Lambda_G$ restricts to an operator on the set of linear extensions of a fixed double-flip equivalence class of acyclic orientations of $G$; this is analogous to the fact that promotion restricts to an operator on the set of linear extensions of a fixed acyclic orientation of $G$. The next theorem tells us that the orbit structure of toric promotion on the set of linear extensions of a double-flip equivalence class $D$ only depends on the flip equivalence class containing $D$. We first record a very simple lemma, the proof of which is immediate. 

\begin{lemma}\label{Lem:EasyRelation}
Let $G=(V,E)$ be a graph with $n$ vertices. The operators $c,\tau_1,\ldots,\tau_n\colon\Lambda_G\to\Lambda_G$ satisfy the relations $c\tau_i=\tau_{i+1} c$, where the indices are taken modulo $n$.   
\end{lemma}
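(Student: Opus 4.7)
The plan is to verify the identity $c\tau_i(\sigma)=\tau_{i+1}c(\sigma)$ for an arbitrary $\sigma\in\Lambda_G$ by unwinding the definitions and splitting into two cases according to whether the toggle $\tau_i$ acts trivially on $\sigma$.

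First, I would record how preimages transform under $c$. Since $c(\sigma)=c\circ\sigma$, we have $(c\sigma)^{-1}(j)=\sigma^{-1}(c^{-1}(j))=\sigma^{-1}(j-1)$, with indices taken modulo $n$. In particular, the vertex carrying label $i+1$ in $c\sigma$ is $\sigma^{-1}(i)$, and the vertex carrying label $i+2$ in $c\sigma$ is $\sigma^{-1}(i+1)$. Hence the edge condition governing whether $\tau_{i+1}$ acts nontrivially on $c\sigma$ coincides with the edge condition governing whether $\tau_i$ acts nontrivially on $\sigma$: both are equivalent to $\{\sigma^{-1}(i),\sigma^{-1}(i+1)\}\notin E$. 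So $c\tau_i$ and $\tau_{i+1}c$ perform a nontrivial swap on the same inputs.

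Second, I would invoke the standard conjugation identity in $S_n$, namely $c\cdot(i\,\,i+1)\cdot c^{-1}=(c(i)\,\,c(i+1))=(i+1\,\,i+2)$, equivalently
\[c\cdot(i\,\,i+1)=(i+1\,\,i+2)\cdot c.\]
Combining the two observations, in the nontrivial case I compute
\[c\tau_i(\sigma)=c\circ(i\,\,i+1)\circ\sigma=(i+1\,\,i+2)\circ c\circ\sigma=\tau_{i+1}(c\sigma)=\tau_{i+1}c(\sigma),\]
while in the trivial case both sides equal $c\circ\sigma$.

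There is no real obstacle here; this is pure bookkeeping. The only points one needs to track carefully are the index shift $(c\sigma)^{-1}(j)=\sigma^{-1}(j-1)$ and the direction of the conjugation identity, both of which are routine. This matches the author's assertion that the proof is immediate.
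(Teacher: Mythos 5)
Your proof is correct, and since the paper declares this lemma's proof to be immediate and omits it entirely, your direct verification (matching the edge conditions via $(c\sigma)^{-1}(j)=\sigma^{-1}(j-1)$ and applying the conjugation identity $c\,(i\,\,i+1)\,c^{-1}=(i+1\,\,i+2)$) is exactly the intended argument, including the wrap-around case $i=n$. Nothing further is needed.
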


\begin{theorem}\label{Thm:General}
Let $G=(V,E)$ be a graph with $n$ vertices, and let $D$ and $D'$ be double-flip equivalence classes of $\Acyc(G)$ that are contained in the same flip equivalence class of $G$. Then there exists $m\in\{0,\ldots,n-1\}$ such that $\tau_1\cdots\tau_m c^m$ is a bijection from $\mathcal L(D)$ to $\mathcal L(D')$ that commutes with $\TPro$. 
\end{theorem}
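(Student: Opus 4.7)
The plan is to choose $m$ using Theorem~\ref{ThmFriends2} so that $c^m$ already moves $\mathcal L(D)$ bijectively onto $\mathcal L(D')$, observe that prepending the toggles $\tau_1,\ldots,\tau_m$ does no harm to this bijection property, and then verify the commutation with $\TPro$ by a telescoping cancellation that uses only Lemma~\ref{Lem:EasyRelation} and the fact that each toggle is an involution.

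First I would invoke Theorem~\ref{ThmFriends2} to order the double-flip equivalence classes inside the common flip equivalence class as $D_1,\ldots,D_\nu$ with $c\colon H_{D_i}\to H_{D_{i+1}}$ an isomorphism. Writing $D=D_i$ and $D'=D_{i+m}$ for some $m\in\{0,\ldots,\nu-1\}\subseteq\{0,\ldots,n-1\}$, iterating yields that $c^m$ restricts to a bijection $\mathcal L(D)\to\mathcal L(D')$. Next, using the characterization recalled just before Theorem~\ref{ThmFriends1}, applying any single toggle $\tau_j$ keeps a labeling in the same connected component of $\mathsf{FS}(\overline G,\mathsf{Cycle}_n)$; combined with Theorem~\ref{ThmFriends1}, this shows that $\tau_j$ preserves $\mathcal L(D')$. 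Composing, $\tau_1\cdots\tau_m c^m\colon \mathcal L(D)\to\mathcal L(D')$ is a bijection.

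To check commutation with $\TPro=\tau_n\tau_{n-1}\cdots\tau_1$, I would compute both sides and show they reduce to the same thing. On the left,
\[
\TPro\cdot(\tau_1\cdots\tau_m c^m)=(\tau_n\tau_{n-1}\cdots\tau_2\tau_1)(\tau_1\tau_2\cdots\tau_m)\,c^m,
\]
and the inner pair $\tau_1\tau_1$ cancels, then $\tau_2\tau_2$, and so on, collapsing the product to $\tau_n\tau_{n-1}\cdots\tau_{m+1}\,c^m$. On the right, iterating Lemma~\ref{Lem:EasyRelation} gives $c^m\tau_i=\tau_{i+m}c^m$ with indices modulo $n$, so
\[
c^m\TPro=(\tau_m\tau_{m-1}\cdots\tau_1\tau_n\tau_{n-1}\cdots\tau_{m+1})\,c^m,
\]
and therefore
\[
(\tau_1\cdots\tau_m c^m)\TPro=\tau_1\cdots\tau_m\cdot\tau_m\cdots\tau_1\cdot\tau_n\cdots\tau_{m+1}\,c^m=\tau_n\tau_{n-1}\cdots\tau_{m+1}\,c^m,
\]
after the palindromic block $\tau_1\cdots\tau_m\tau_m\cdots\tau_1$ collapses to the identity. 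Both sides agree, proving commutation.

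The step that most deserves care is the algebraic reduction in the previous paragraph, since it relies on cancelling the right palindrome of toggles without ever swapping non-involutive adjacent pairs; but the structure is forced by Lemma~\ref{Lem:EasyRelation}, and no further input is needed beyond the indexing convention $\tau_n=\tau_{n,1}$. Aside from that cancellation, the only conceptual ingredient is the isomorphism in Theorem~\ref{ThmFriends2}, which is precisely what furnishes a valid choice of $m$, so the argument should be short.
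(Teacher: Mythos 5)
Your proposal is correct and follows essentially the same route as the paper: choose $m$ via Theorem~\ref{ThmFriends2}, note that the toggles preserve each $\mathcal L(D')$ (the paper observes directly that $\tau_i$ preserves the induced acyclic orientation, while you route through the connected-component characterization of Theorem~\ref{ThmFriends1} — an equivalent justification), and then verify commutation by exactly the same telescoping cancellation using Lemma~\ref{Lem:EasyRelation} and the fact that each $\tau_i$ is an involution.
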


\begin{proof}
It follows from Theorem~\ref{ThmFriends2} that there exists $m\in\{0,\ldots,n-1\}$ such that $c^m$ is a bijection from $\mathcal L(D)$ to $\mathcal L(D')$. Since $\tau_1,\ldots,\tau_m$ are bijections that preserve the acyclic orientation induced by a labeling, the map $\tau_1\cdots\tau_m c^m$ is also a bijection from $\mathcal L(D)$ to $\mathcal L(D')$. Using Lemma~\ref{Lem:EasyRelation}, we find that \[c^m\TPro=c^m\tau_n\cdots\tau_1=\tau_m\cdots\tau_1\tau_n\cdots\tau_{m+1} c^m.\] Therefore, \[\tau_1\cdots \tau_m c^m\TPro=\tau_1\cdots \tau_m \tau_m\cdots\tau_1\tau_n\cdots\tau_{m+1} c^m=\tau_n\cdots\tau_{m+1} c^m=\tau_n\cdots\tau_{m+1}\tau_m\cdots\tau_1\tau_1\cdots\tau_m c^m\] \[=\TPro\tau_1\cdots \tau_m c^m. \qedhere\]
\end{proof}

\section{Toric Promotion on Forests}
The purpose of this section is to prove Theorem~\ref{Thm:Forest}, which describes the orbit structure of toric promotion when $G$ is a forest. We begin with some general lemmas that apply when $G$ is an arbitrary graph with $n$ vertices. As in the previous section, let $c\colon\Lambda_G\to\Lambda_G$ be the operator that cyclically shifts the labels in a labeling. 

\begin{lemma}\label{Lem:FactoringTPro}
Let $s$ and $k$ be integers with $s\geq 0$ and $0\leq k\leq n-2$. We have $\TPro^{(n-1)s+k}= \tau_n\tau_{n-1}\cdots\tau_{n-k+1} c^{-k}(c\Pro)^{ns+k}$.
\end{lemma}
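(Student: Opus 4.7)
I plan to prove the identity by induction on the integer $m=(n-1)s+k$. The base case $m=0$ (i.e., $s=k=0$) is immediate, since both sides equal the identity operator.

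For the inductive step, the key auxiliary identity I would establish first is
\[\TPro\cdot(\tau_n\tau_{n-1}\cdots\tau_{n-k+1})\,c^{-k}\;=\;(\tau_n\tau_{n-1}\cdots\tau_{n-k})\,c^{-(k+1)}(c\Pro)\qquad (0\leq k\leq n-2).\]
This follows by combining two elementary facts. The first is the trivial factorization
\[\TPro\;=\;(\tau_n\tau_{n-1}\cdots\tau_{n-k})(\tau_{n-k-1}\tau_{n-k-2}\cdots\tau_1),\]
obtained by splitting the defining product of $\TPro$ at position $n-k$. The second is the conjugation identity
\[c^{-k}\Pro\, c^k\;=\;(\tau_{n-k-1}\tau_{n-k-2}\cdots\tau_1)(\tau_n\tau_{n-1}\cdots\tau_{n-k+1}),\]
which follows by iterating $c^{-1}\tau_i c=\tau_{i-1}$ (a restatement of Lemma~\ref{Lem:EasyRelation}, with indices read mod $n$); this says $c^{-k}\Pro c^k$ is still a product of $n-1$ toggles, but with the ``missing'' toggle cyclically shifted from index $n$ to index $n-k$. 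Substituting the conjugation identity after rewriting $c^{-(k+1)}(c\Pro)=c^{-k}\Pro$ on the right side, and then invoking the factorization of $\TPro$, yields the auxiliary identity.

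Once the auxiliary identity is in hand, the induction is routine for $0\leq k\leq n-3$: multiplying the inductive hypothesis on the left by $\TPro$ produces the claim at exponent $m+1=(n-1)s+(k+1)$. The interesting case is $k=n-2$, where $m+1$ must be reparsed as $(n-1)(s+1)+0$ and the right-hand side of the claimed identity therefore changes form. Here I would apply the auxiliary identity once more to reach
\[\TPro^{(n-1)s+(n-1)}\;=\;(\tau_n\tau_{n-1}\cdots\tau_2)\,c^{-(n-1)}(c\Pro)^{ns+n-1},\]
and then use $c^{-(n-1)}=c$ (since $c^n=\mathrm{id}$) together with the identity $c\Pro=(\tau_n\tau_{n-1}\cdots\tau_2)\,c$, the latter obtained by sliding $c$ rightward through each factor of $\Pro$ via Lemma~\ref{Lem:EasyRelation}. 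These collapse the prefix $(\tau_n\cdots\tau_2)\,c$ into a single extra copy of $c\Pro$, giving $(c\Pro)^{ns+n}=(c\Pro)^{n(s+1)}$, which matches the claim for $(s+1,0)$. The main obstacle I anticipate is the index bookkeeping modulo $n$ in this ``wrap-around'' step; otherwise all of the algebra reduces to the single commutation relation of Lemma~\ref{Lem:EasyRelation} together with one elementary factorization of $\TPro$.
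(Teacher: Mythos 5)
Your proof is correct and rests on exactly the same key identity as the paper's: conjugating $\Pro$ by $c^{-k}$ yields the product of all toggles except $\tau_{n-k}$, i.e.\ $c^{-k}\Pro c^{k}=\tau_{n-k-1}\cdots\tau_1\tau_n\cdots\tau_{n-k+1}$, which is precisely the paper's $\omega_{n-k-1}$. The only difference is packaging — you absorb one factor of $\TPro$ at a time by induction, while the paper regroups the full product $(\tau_n\cdots\tau_1)^{(n-1)s+k}$ into these conjugates all at once and telescopes — so this is essentially the same argument.
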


\begin{proof}
Let $\omega_r=\tau_r\tau_{r-1}\cdots\tau_1\tau_n\tau_{n-1}\cdots\tau_{r+2}$ (with indices modulo $n$). Using Lemma~\ref{Lem:EasyRelation}, one readily checks that $\omega_r=c^{-(n-r-1)}\Pro c^{n-r-1}$. Thus, 
\begin{align}
\TPro^{n-1}&=(\tau_n\cdots\tau_1)^{n-1} \nonumber \\ &=\omega_0\omega_{1}\cdots\omega_{n-1} \nonumber \\
&=(c^{-(n-1)}\Pro c^{n-1})(c^{-(n-2)}\Pro c^{n-2})\cdots(c^{-1}\Pro c)(c^{-0}\Pro c^0) \nonumber\\ &=c^{-(n-1)}\Pro\,(c\Pro)^{n-1} \nonumber \\ &=(c\Pro)^n. \nonumber
\end{align}
This shows that $\TPro^{(n-1)s}=(c\Pro)^{ns}$. The proof now follows from the computation
\begin{align}
\TPro^k&=\tau_{n}\tau_{n-1}\cdots\tau_{n-k+1}\omega_{n-k}\omega_{n-k+1}\cdots\omega_{n-1} \nonumber \\ &=\tau_n\tau_{n-1}\cdots\tau_{n-k+1}(c^{-(k-1)}\Pro c^{k-1})(c^{-(k-2)}\Pro c^{k-2})\cdots(c^{-0}\Pro c^0) \nonumber \\ &=\tau_n\tau_{n-1}\cdots\tau_{n-k+1}c^{-k}(c\Pro)^k. \qedhere
\end{align}

\end{proof}

Lemma~\ref{Lem:FactoringTPro} motivates us to better understand $c\Pro\colon\Lambda_G\to\Lambda_G$. This operator can be described via a sliding operation analogous to jeu de taquin. Given adjacent vertices $u,v\in V$ and $\sigma\in\Lambda_G$, let $\jdt_{(u,v)}=(\sigma(u)\,\,\sigma(v))\circ \sigma$ be the labeling obtained from $\sigma$ by swapping the labels of $u$ and $v$. More generally, if $\mathcal P=(v_1,\ldots,v_r)$ is a path in $G$ (i.e., a tuple of distinct vertices such that $v_i$ is adjacent to $v_{i+1}$ for all $i\in[r-1]$), then we define \[\jdt_{\mathcal P}=\jdt_{(v_{r-1},v_r)}\cdots\jdt_{(v_2,v_3)}\jdt_{(v_1,v_2)}.\] Thus, $\jdt_{\mathcal P}(\sigma)$ is obtained from $\sigma$ by sliding the label $\sigma(v_1)$ along the path $\mathcal P$; at each step, this label swaps places with the label on the next vertex in the path. When $r=1$, we make the convention that $\jdt_{\mathcal P}(\sigma)=\sigma$ for all $\sigma$. 

The proof of the next lemma is straightforward and is essentially the same as the standard proof that Sch\"utzenberger's definition of promotion using jeu de taquin coincides with the definition of promotion as a composition of toggle operators (e.g., see the proof of Theorem~2.1 in \cite{StanleyPromotion}). See Example~\ref{ExamE} for an illustration. 

\begin{lemma}\label{Lem:jdt1}
Let $\sigma\in\Lambda_G$. Construct a sequence $j_1,\ldots,j_r$ recursively as follows. First, let $j_1=1$. 
For each $i$, define $j_{i+1}$ to be the smallest label that is larger than $j_i$ and is assigned to a vertex adjacent to $\sigma^{-1}(j_i)$, assuming such a vertex exists. This process ends when we reach a label $j_r$ that is larger than all labels assigned to vertices adjacent to $\sigma^{-1}(j_r)$. Consider the path $\mathcal P=(\sigma^{-1}(j_1),\ldots,\sigma^{-1}(j_r))$. We have $(c\Pro)(\sigma)=\jdt_{\mathcal P}(\sigma)$. 
\end{lemma}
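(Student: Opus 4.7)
The plan is to describe every intermediate state $\sigma_k=\tau_k\tau_{k-1}\cdots\tau_1(\sigma)$ explicitly via an inductive invariant, apply that invariant at $k=n-1$ to read off $\Pro(\sigma)$, and then apply $c$ to obtain $\jdt_{\mathcal P}(\sigma)$. Write $v_i=\sigma^{-1}(j_i)$, so that $\mathcal P=(v_1,\ldots,v_r)$. For each $k\in\{0,1,\ldots,n-1\}$, set $i(k)=\max\{i\in[r]:j_i\leq k+1\}$, which is well defined because $j_1=1$; note that $i(k)$ is weakly increasing in $k$ and that $i(k)=i(k-1)+1$ precisely when $k+1\in\{j_1,\ldots,j_r\}$.

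The invariant I would prove by induction on $k$ reads as follows: in $\sigma_k$, the vertex $v_{i(k)}$ carries the label $k+1$; each vertex $v_i$ with $1\leq i<i(k)$ carries its ``final'' label $j_{i+1}-1$; each vertex $v_i$ with $i(k)<i\leq r$ carries its original label $j_i$; and each non-path vertex $u$ carries $\sigma(u)-1$ if $\sigma(u)\leq k+1$ and $\sigma(u)$ otherwise. Intuitively, an ``active'' label is marching along $\mathcal P$: at time $k$ it sits at $v_{i(k)}$ with value $k+1$, it leaves the correct final value behind on each path vertex it has departed, and it decrements each non-path label by one as it swaps past.

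The base case $k=0$ is immediate, since $i(0)=1$ and no toggles have yet acted. For the inductive step from $k-1$ to $k$, the invariant at $k-1$ tells us that the label $k$ sits at $v_{i(k-1)}$, while the label $k+1$ sits either at $v_{i(k-1)+1}$ (precisely when $k+1=j_{i(k)}$, so $i(k)=i(k-1)+1$) or at the non-path vertex $u=\sigma^{-1}(k+1)$ (precisely when $i(k)=i(k-1)$). In the first case the two labels occupy consecutive vertices of $\mathcal P$, so $\tau_k$ is a no-op and the invariant at step $k$ follows at once. In the second case one of two sub-cases holds: either $j_{i(k-1)}<k+1<j_{i(k-1)+1}$, or $i(k-1)=r$ and $k+1>j_r$. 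In both sub-cases the extremal property defining $j_{i(k-1)+1}$ (respectively, the termination of the construction of $\mathcal P$ at $v_r$) forces $u$ to be non-adjacent to $v_{i(k-1)}$, so $\tau_k$ swaps the two labels; the resulting state is precisely what the invariant at step $k$ predicts.

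Plugging $k=n-1$ (so $i(k)=r$) into the invariant gives $\Pro(\sigma)(v_i)=j_{i+1}-1$ for $1\leq i<r$, $\Pro(\sigma)(v_r)=n$, and $\Pro(\sigma)(u)=\sigma(u)-1$ for every $u\notin\mathcal P$. Composing with $c$ shifts each label up by one modulo $n$, yielding the labeling that agrees with $\sigma$ off $\mathcal P$, assigns $j_{i+1}$ to $v_i$ for $i<r$, and assigns $1$ to $v_r$---exactly $\jdt_{\mathcal P}(\sigma)$. The main obstacle is the case analysis in the inductive step: one must use the minimality in the defining property of each $j_{i+1}$ (and the reason the path terminates at $v_r$) to verify that the ``active'' label is non-adjacent to its potential swap partner exactly when the invariant predicts a swap, and adjacent exactly when it predicts a no-op.
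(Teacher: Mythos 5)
Your proposal is correct, and it is essentially the argument the paper has in mind: the paper omits the proof, remarking that it is the standard verification that the toggle description of promotion agrees with the jeu-de-taquin description (citing Stanley's survey), and your inductive invariant tracking the ``active'' label $k+1$ through $\tau_k\cdots\tau_1(\sigma)$ is precisely that standard argument written out in full. The case analysis (no-op when the active label meets the next path label $j_{i(k)}$ on an adjacent vertex, swap otherwise by minimality of $j_{i(k-1)+1}$ or by the termination condition at $v_r$) is complete and the final bookkeeping with $c$ checks out.
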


We now specialize to the case when $G$ is a forest. 

\begin{lemma}\label{Lem:jdt2}
Let $G$ be a forest. Let $\sigma\in\Lambda_G$. For each integer $\ell\geq 0$, there exists a unique path $\mathcal P=(v_1,\ldots,v_r)$ in $G$ such that $\sigma(v_1)=1$ and such that $(c\Pro)^\ell(\sigma)=\jdt_{\mathcal P}(\sigma)$.  
\end{lemma}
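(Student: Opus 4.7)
The plan is induction on $\ell$, using Lemma~\ref{Lem:jdt1} repeatedly and the fact that in a forest any two vertices in the same connected component are joined by a unique simple path. The base case $\ell=0$ is handled by the trivial path $\mathcal P=(\sigma^{-1}(1))$ (for which $\jdt_{\mathcal P}(\sigma)=\sigma$ by convention), and $\ell=1$ is Lemma~\ref{Lem:jdt1}. A small observation I would use repeatedly: if $\sigma(v_1)=1$, then the label $1$ in $\jdt_{\mathcal P}(\sigma)$ ends up at the last vertex of $\mathcal P$.

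For the inductive step, suppose $(c\Pro)^\ell(\sigma)=\jdt_{\mathcal Q}(\sigma)$ for the unique path $\mathcal Q=(v_1,\ldots,v_m)$ with $v_1=\sigma^{-1}(1)$, so that label $1$ in $(c\Pro)^\ell(\sigma)$ sits at $v_m$. Applying Lemma~\ref{Lem:jdt1} to $(c\Pro)^\ell(\sigma)$ produces a path $\mathcal P=(w_0,w_1,\ldots,w_s)$ with $w_0=v_m$ and $(c\Pro)^{\ell+1}(\sigma)=\jdt_{\mathcal P}(\jdt_{\mathcal Q}(\sigma))$. Here the forest hypothesis becomes essential. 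Let $k$ be the largest integer for which $w_i=v_{m-i}$ for every $0\le i\le k$; this records how far $\mathcal P$ initially backtracks along $\mathcal Q$. If $k<s$, then $w_{k+1}$ is a neighbor of $v_{m-k}$ which, by maximality of $k$ and the absence of cycles in $G$, lies in a subtree hanging off $v_{m-k}$ disjoint from $\mathcal Q\setminus\{v_{m-k}\}$; consequently $w_{k+1},\ldots,w_s$ all avoid $\mathcal Q$, and the concatenation $\mathcal Q':=(v_1,\ldots,v_{m-k},w_{k+1},\ldots,w_s)$ is a valid path in $G$ (interpreted as $\mathcal Q':=(v_1,\ldots,v_{m-k})$ when $k=s$).

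The main technical step is then the identity $\jdt_{\mathcal P}\circ\jdt_{\mathcal Q}=\jdt_{\mathcal Q'}$, which I would prove by tracking labels vertex by vertex: the backtrack portion $(v_m,v_{m-1},\ldots,v_{m-k})$ of $\mathcal P$ cancels the last $k$ swaps performed by $\jdt_{\mathcal Q}$, restoring the original labels at $v_{m-k+1},\ldots,v_m$ and depositing label $1$ at $v_{m-k}$; the remaining portion $(v_{m-k},w_{k+1},\ldots,w_s)$ then slides label $1$ along fresh vertices, which is exactly the effect of $\jdt_{\mathcal Q'}$. This closes the induction and yields the existence of $\mathcal P$. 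Uniqueness is immediate: any path satisfying the conclusion must run from $\sigma^{-1}(1)$ to the position of label $1$ in $(c\Pro)^\ell(\sigma)$, and in a forest there is at most one simple path between any two vertices. The main obstacle is the algebraic identity above---in principle routine bookkeeping, but its validity rests entirely on the rigid \emph{backtrack-then-deviate} shape of $\mathcal P$ relative to $\mathcal Q$ that the forest hypothesis forces.
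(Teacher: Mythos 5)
Your proof is correct and follows essentially the same route as the paper: induction on $\ell$, applying Lemma~\ref{Lem:jdt1} to $(c\Pro)^{\ell}(\sigma)$ and observing that the composition of the two jeu-de-taquin slides equals a single slide along the unique path in the forest from $\sigma^{-1}(1)$ to the final position of the label $1$. The only difference is that you spell out the backtrack-then-deviate cancellation behind the identity $\jdt_{\mathcal P}\circ\jdt_{\mathcal Q}=\jdt_{\mathcal Q'}$, which the paper simply asserts.
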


\begin{proof}
When $\ell=0$, the statement is trivial (take $r=1$ so that $\jdt_{\mathcal P}(\sigma)=\sigma$). When $\ell=1$, the statement follows directly from Lemma~\ref{Lem:jdt1}. Now suppose $\ell\geq 2$. By induction, we know that there is a unique path $\mathcal P'=(v_1',\ldots,v_{r'}')$ such that $\sigma(v_1')=1$ and such that $(c\Pro)^{\ell-1}(\sigma)=\jdt_{\mathcal P'}(\sigma)$. Let $\sigma'=(c\Pro)^{\ell-1}(\sigma)$. Appealing to Lemma~\ref{Lem:jdt1} again, we see that there is a unique path $\mathcal P''=(v_1'',\ldots,v_{r''}'')$ such that $\sigma'(v_1'')=1$ and such that $(c\Pro)(\sigma')=\jdt_{\mathcal P''}(\sigma')$. Thus, $(c\Pro)^{\ell}(\sigma)=\jdt_{\mathcal P''}\jdt_{\mathcal P'}(\sigma)$. Let $\mathcal P$ be the unique path in the forest $G$ starting at $v_1'$ and ending at $v_{r''}''$. Then $(c\Pro)^{\ell}(\sigma)=\jdt_{\mathcal P''}\jdt_{\mathcal P'}(\sigma)=\jdt_{\mathcal P}(\sigma)$. Furthermore, the first vertex in $\mathcal P$ is $v_1'$, which is $\sigma^{-1}(1)$. 
\end{proof}

\begin{example}\label{ExamE} Consider the following applications of $c\Pro$ to labelings of a tree: 
\[\begin{array}{l}\includegraphics[height=4.315cm]{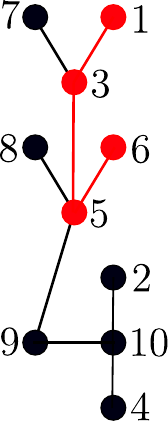}\end{array}\xrightarrow{c\Pro}\begin{array}{l}\includegraphics[height=4.315cm]{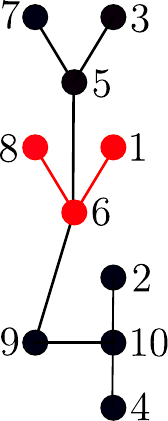}\end{array}\xrightarrow{c\Pro}\begin{array}{l}\includegraphics[height=4.315cm]{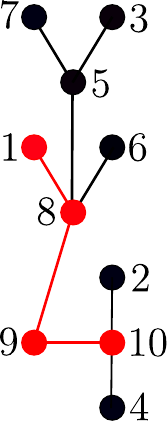}\end{array}\xrightarrow{c\Pro}\begin{array}{l}\includegraphics[height=4.315cm]{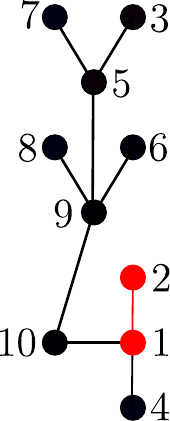}\end{array}\xrightarrow{c\Pro}\begin{array}{l}\includegraphics[height=4.315cm]{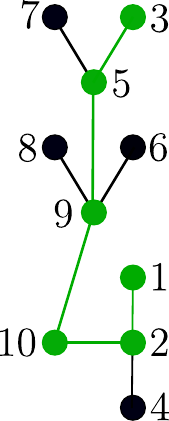}\end{array}.\] Let these labelings be $\sigma_0,\sigma_1,\sigma_2,\sigma_3,\sigma_4$, listed from left to right. Thus, $\sigma_{j}=(c\Pro)^{j}(\sigma_0)$. For each $0\leq i\leq 3$, we have drawn the image depicting $\sigma_i$ so that the path $\mathcal P$ satisfying $\sigma_{i+1}=\jdt_{\mathcal P}(\sigma_i)$ (which exists by Lemma~\ref{Lem:jdt1}) is colored red. In the image depicting $\sigma_4$, we have colored green the path $\mathcal P$ such that $\sigma_4=\jdt_{\mathcal P}(\sigma_0)$ (which exists by Lemma~\ref{Lem:jdt2}). 
\end{example}

We can now proceed to the core arguments involved in the proof of Theorem~\ref{Thm:Forest}. 

\begin{proposition}\label{Prop1}
If $G$ is a forest with $n\geq 2$ vertices, then every orbit of the operator $\TPro\colon\Lambda_G\to\Lambda_G$ has size divisible by $n-1$. 
\end{proposition}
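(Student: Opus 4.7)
The plan is to argue by contradiction. Suppose $\TPro^k(\sigma)=\sigma$ for some $0<k<n-1$. Combining Lemma~\ref{Lem:FactoringTPro} (with $s=0$) with Lemma~\ref{Lem:jdt2}, I would write
\[\sigma=\tau_n\tau_{n-1}\cdots\tau_{n-k+1}\,c^{-k}\jdt_{\mathcal P}(\sigma),\]
where $\mathcal P=(v_1,\ldots,v_r)$ is a path in $G$ with $v_1=\sigma^{-1}(1)$. Set $\mu=\jdt_{\mathcal P}(\sigma)$, $\sigma'=c^{-k}\mu$, and $T=\{1,n-k+1,\ldots,n\}$, which is a set of $k+1$ labels.

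The first key observation is that each toggle $\tau_j$ with $j\in\{n-k+1,\ldots,n\}$ permutes only labels in $T$. Consequently, any vertex $v$ with $\sigma'(v)\notin T$ is fixed by the composition $\tau_n\cdots\tau_{n-k+1}$, giving $\sigma(v)=\sigma'(v)=\mu(v)-k\bmod n$. If additionally $v\notin\mathcal P$ then $\mu(v)=\sigma(v)$, forcing $k\equiv 0\pmod n$, which is impossible for $0<k<n$. Hence $\sigma^{-1}(\{2,\ldots,n-k\})\subseteq\mathcal P$, so in particular $r\geq n-k$; applying the same reasoning to vertices $v_i\in\mathcal P$ with $\sigma'(v_i)\notin T$ yields the arithmetic constraint $\sigma(v_{i+1})=\sigma(v_i)+k$ whenever $\sigma(v_i)\in\{2,\ldots,n-k\}$, pinning down the label sequence along $\mathcal P$ as a concatenation of arithmetic chains of common difference $k$.

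The case $r=1$ is dispatched immediately: then $\jdt_{\mathcal P}$ is the identity, so $\mu=\sigma$ and $\sigma'=c^{-k}\sigma$, and applying the above reasoning directly to $\sigma$ forces $V\subseteq\sigma^{-1}(T)$, contradicting $|T|=k+1<n$.

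For $r>1$ I would track the position of label~$1$. It lies at $v_1$ in $\sigma$ and at $\mu^{-1}(k+1)$ in $\sigma'$, and among the toggles $\tau_{n-k+1},\ldots,\tau_n$ only $\tau_n$ can move it. In the generic case where every toggle actually swaps, one checks directly that $\tau_n\cdots\tau_{n-k+1}$ acts on the $k+1$ labels of $T$ at the positions $S:=\sigma'^{-1}(T)$ as a cyclic shift of those positions, sending label~$1$ to $\mu^{-1}(1)=v_r\neq v_1$ and producing the contradiction. The main obstacle is the non-generic case in which some pair of positions in $S$ happens to be adjacent in $G$ and the corresponding toggle fails to swap; handling this requires combining the arithmetic chain structure of $\mathcal P$ obtained above with the forest hypothesis on $G$ to rule out label~$1$ ever returning to $v_1$, which I expect to be the crux of the proof.
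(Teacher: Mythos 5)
Your proposal has two genuine gaps. First, the reduction is incomplete: to show every orbit size is divisible by $n-1$ you must rule out $\TPro^{(n-1)s+k}(\sigma)=\sigma$ for \emph{all} $s\geq 0$ and $1\leq k\leq n-2$, not just the case $s=0$. An orbit of size $n+2$, say, produces no fixed point of any $\TPro^k$ with $0<k<n-1$, so your hypothesis never applies to it. This part is repairable, since Lemmas~\ref{Lem:FactoringTPro} and~\ref{Lem:jdt2} let you replace $(c\Pro)^k$ by $(c\Pro)^{ns+k}$ throughout with no change to the rest of the argument.

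The second gap is the serious one: your ``first key observation'' extracts too little from the toggles, and the case you defer as ``the crux'' is exactly the hard part of the proof. The paper's argument uses a sharper fact about $\tau_n\cdots\tau_{n-k+1}$: for a vertex $x\notin\mathcal P$ it must raise the label of $x$ by exactly $k$ modulo $n$ (lowering by $n-k\geq 2$ is impossible since each vertex's label drops by at most $1$), and a label can only rise by $k$ by riding the entire chain $n-k+1\to n-k+2\to\cdots\to n\to 1$; this forces $\sigma(x)=1$, which is absurd because $\sigma^{-1}(1)=v_1\in\mathcal P$. Hence \emph{every} vertex lies on $\mathcal P$, so $G$ is a path with $\sigma^{-1}(1)$ at an endpoint. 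Your weaker observation only excludes vertices $v\notin\mathcal P$ with $\sigma'(v)\notin T$ (and the surviving label set is $\{1,\ldots,k+1\}$, i.e.\ $\sigma^{-1}(\{k+2,\ldots,n\})\subseteq\mathcal P$, not $\sigma^{-1}(\{2,\ldots,n-k\})$ as you wrote), leaving up to $k$ vertices off the path and forcing the generic/non-generic dichotomy you do not resolve. Moreover, even once one knows $G$ is a path and $\sigma^{-1}(1)$ is an endpoint, no contradiction is yet available: the paper needs a second, independent idea, namely conjugating by the commuting bijection $\tau_1\cdots\tau_m c^m$ from Theorem~\ref{Thm:General} to manufacture another labeling fixed by the same power of $\TPro$ whose label $1$ sits at a non-endpoint vertex. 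Nothing in your sketch plays this role, so the argument as proposed does not close.
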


\begin{proof}
This is trivial if $n=2$, so assume $n\geq 3$. Let $s$ and $k$ be integers with $s\geq 0$ and $1\leq k\leq n-2$ so that $(n-1)s+k$ is not divisible by $n-1$, and suppose by way of contradiction that there is a labeling $\sigma\in\Lambda_G$ with $\TPro^{(n-1)s+k}(\sigma)=\sigma$. Lemma~\ref{Lem:FactoringTPro} tells us that $\TPro^{(n-1)s+k}=\tau_n\tau_{n-1}\cdots\tau_{n-k+1} c^{-k}(c\Pro)^{ns+k}$, and Lemma~\ref{Lem:jdt2} tells us that there is a path $\mathcal P=(v_1,\ldots,v_r)$ in $G$ such that $\sigma(v_1)=1$ and such that $(c\Pro)^{ns+k}(\sigma)=\jdt_{\mathcal P}(\sigma)$. Suppose there is a vertex $x$ of $G$ that is not in $\mathcal P$. Then $\jdt_{\mathcal P}(\sigma)(x)=\sigma(x)$, so \[\sigma(x)=\TPro^{(n-1)s+k}(\sigma)(x)=(\tau_n\tau_{n-1}\cdots\tau_{n-k+1} c^{-k}(c\Pro)^{ns+k})(\sigma)(x)\] \[=(\tau_n\tau_{n-1}\cdots\tau_{n-k+1} c^{-k})(\sigma)(x).\] When we apply $c^{-k}$ to $\sigma$, the label of $x$ decreases by $k$ (modulo $n$), so applying $\tau_n\tau_{n-1}\cdots\tau_{n-k+1}$ to $c^{-k}(\sigma)$ must either increase the label of $x$ by $k$ or decrease the label of $x$ by $n-k$. The latter scenario is impossible because $n-k\geq 2$ and $\tau_n\tau_{n-1}\cdots\tau_{n-k+1}$ can only decrease a label by at most $1$. So applying this composition of $k$ toggle operators to $c^{-k}(\sigma)$ increases the label of $x$ by $k$; this can only happen if $c^{-k}(\sigma)(x)=n-k+1$. Hence, we must have $\sigma(x)=1$. This is a contradiction because $v_1=\sigma^{-1}(1)$ is in $\mathcal P$ and $x$ is not in $\mathcal P$. 

The preceding paragraph proves that every vertex in $G$ is in $\mathcal P$, so $G$ is a path graph. Furthermore, $\sigma^{-1}(1)=v_1$ must be an endpoint of $G$. In fact, this argument shows that if $\mu\in\Lambda_G$ is any labeling that is fixed by $\TPro^{(n-1)s+k}$, then $\mu^{-1}(1)$ is an endpoint of $G$. Let $y$ be the unique vertex adjacent to $v_1$; note that $y$ is not an endpoint of $G$ because $n\geq 3$. Let $m=n+1-\sigma(y)$, and let $\mu=(\tau_1\cdots\tau_m c^m)(\sigma)$. We saw in the proof of Theorem~\ref{Thm:General} that $\tau_1\cdots\tau_m c^m$ commutes with $\TPro$, so $\TPro^{(n-1)s+k}(\mu)=(\tau_1\cdots\tau_m c^m\TPro^{(n-1)s+k})(\sigma)=(\tau_1\cdots\tau_m c^m)(\sigma)=\mu$. Hence, $\mu^{-1}(1)$ must be an endpoint of $G$. Now, $c^m(\sigma)(v_1)=m+1$, and $c^m(\sigma)(y)=1$. Since $y$ is the only vertex in $G$ adjacent to $v_1$, we have $(\tau_2\cdots\tau_m c^m)(\sigma)(v_1)=2$ and $(\tau_2\cdots\tau_m c^m)(\sigma)(y)=1$. Then $\tau_1$ does nothing when we apply it to $(\tau_2\cdots\tau_m c^m)(\sigma)$, so $\mu(y)=(\tau_1\tau_2\cdots\tau_m c^m)(\sigma)(y)=(\tau_2\cdots\tau_m c^m)(\sigma)(y)=1$. This is a contradiction because it shows that $\mu^{-1}(1)=y$ is not an endpoint of $G$. 
\end{proof}

\begin{proposition}\label{Prop2}
If $G$ is a tree with $n$ vertices, then $\TPro^{n-1}(\sigma)=\sigma$ for all $\sigma\in\Lambda_G$. 
\end{proposition}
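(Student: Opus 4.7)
The natural first step is Lemma~\ref{Lem:FactoringTPro} with $s=1$ and $k=0$, which gives $\TPro^{n-1} = (c\Pro)^n$. Hence the goal reduces to showing $(c\Pro)^n(\sigma) = \sigma$ for every $\sigma \in \Lambda_G$. Fixing such a $\sigma$ and setting $v_1 = \sigma^{-1}(1)$, Lemma~\ref{Lem:jdt2} produces a unique path $\mathcal{P} = (v_1, \ldots, v_r)$ in $G$ with $(c\Pro)^n(\sigma) = \jdt_{\mathcal{P}}(\sigma)$. Because $\jdt$ along any nontrivial path strictly shifts the label away from its first vertex (since labels are distinct and $G$ is a tree, so paths are simple), the identity $\jdt_{\mathcal{P}}(\sigma) = \sigma$ is equivalent to $r = 1$. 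So Proposition~\ref{Prop2} is equivalent to the assertion that label~$1$ returns to $v_1$ after $n$ applications of $c\Pro$.

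Write $p_\ell$ for the position of label~$1$ in $(c\Pro)^\ell(\sigma)$; by Lemma~\ref{Lem:jdt1}, $p_{\ell+1}$ is the endpoint of the greedy ascending path starting at $p_\ell$ in the current labeling. I would aim to prove the stronger statement that $p_0, p_1, \ldots, p_{n-1}$ are pairwise distinct (and hence exhaust $V$), while $p_n = p_0$: label~$1$ rests at each vertex of $G$ exactly once before returning home, performing a canonical Euler-tour-like traversal of the tree. Small-case computations (paths, stars, caterpillars on up to five or six vertices) all conform to this picture, and they further suggest that the total path length $\sum_{\ell=0}^{n-1} |\mathcal{P}_\ell|$ equals $2(n-1)$, consistent with label~$1$ crossing each edge of $G$ once in each direction.

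My proposed route to the trajectory claim is induction on $n$: pick a distinguished leaf of $G$ (for instance the one whose label in $\sigma$ is largest, or the one adjacent to $v_1$ if $v_1$ is itself a leaf), track the step at which label~$1$ first reaches this leaf, and use the rigidity of the ascending-path recipe near a degree-one vertex to reduce the remaining analysis to a smaller tree via the inductive hypothesis. A parallel approach is to associate to $\sigma$ an explicit cyclic ordering of $V$ --- for example, a depth-first traversal of $G$ rooted at $v_1$ in which children are explored in order of increasing label of the attached subtree --- and to show by induction on $\ell$ that $p_\ell$ is the $(\ell+1)$-st vertex of this ordering.

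The main obstacle is that $p_{\ell+1}$ is a highly nonlocal function of $(c\Pro)^\ell(\sigma)$, not just of $p_\ell$: all other labels in the tree influence which ascending path label~$1$ takes. Thus, controlling the trajectory for $n$ consecutive steps demands either a conserved structural invariant or a clean recursive decomposition of the tree and its labeling that survives the $c\Pro$ dynamics. Once label~$1$ is shown to return to $v_1$, Lemma~\ref{Lem:jdt2} closes out the proof immediately.
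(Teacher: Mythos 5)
Your opening reductions are correct and coincide with the paper's: $\TPro^{n-1}=(c\Pro)^n$ via Lemma~\ref{Lem:FactoringTPro}, and by Lemma~\ref{Lem:jdt2} the identity $(c\Pro)^n(\sigma)=\sigma$ holds if and only if the jdt path is trivial, i.e.\ if and only if label $1$ returns to $v_1$. But everything after that is a statement of intent rather than a proof. The central claim --- that the positions $p_0,\ldots,p_{n-1}$ of label $1$ are pairwise distinct and that $p_n=p_0$ --- is exactly the hard part, and you do not establish it; you propose two strategies (induction via a distinguished leaf, or matching $p_\ell$ against a depth-first ordering) and then, candidly, identify the obstruction that defeats both as stated: $p_{\ell+1}$ depends on the entire labeling, not just on $p_\ell$, and no invariant controlling this dependence is supplied. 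The leaf-deletion induction has an additional structural problem you should be aware of: deleting a leaf changes $n$, hence changes the cyclic shift $c$ and the set of toggles, so the $c\Pro$ dynamics on the smaller tree is not the restriction of the dynamics on the larger one; some renormalization of labels (as in the paper's proof of Theorem~\ref{Thm:Forest}, where a whole connected component is isolated) would be needed, and it is not clear one exists for a single leaf. Note also that your target is strictly stronger than the proposition (only $p_n=p_0$ is needed, not that the trajectory is a permutation of $V$), so you may be making the problem harder.

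For comparison, the paper avoids tracking the trajectory of label $1$ entirely. It argues by contradiction using a winding-number (weight) argument: place the vertices on a circle of $n$ positions according to their labels and record each vertex's net clockwise displacement over the $(n-1)n$ toggles in $\TPro^{n-1}$. The total displacement is conserved at $0$; since adjacent vertices never swap under a toggle, the vertices of the nontrivial jdt path all acquire displacement of one sign (say positive), forcing some vertex $z$ off the path to have negative displacement. Such a $z$ returns to its starting position, so its displacement is a negative multiple of $n$, meaning it completes a full counterclockwise loop and therefore swaps with --- hence is non-adjacent to --- every vertex of nonnegative displacement. This disconnects the tree, a contradiction. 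If you want to salvage your approach, you would need to find the ``conserved structural invariant'' you allude to; the paper's weight function is precisely such an invariant, but it is deployed globally rather than to control the trajectory of a single label.
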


\begin{proof}
Suppose instead that there exists $\sigma\in\Lambda_G$ with $\TPro^{n-1}(\sigma)\neq\sigma$. Consider a circle with $n$ marked points labeled $1,\ldots,n$ in clockwise order; we refer to the marked point with label $i$ as \emph{position} $i$. Given a labeling $\mu\in\Lambda_G$, we imagine placing the vertices of $G$ on the marked points of the circle so that each vertex $x$ sits on position $\mu(x)$. Consider the process where we begin with the labeling $\sigma$ and apply the toggle operators in the composition $(\tau_n\cdots\tau_1)^{n-1}$ to reach the labeling $\TPro^{n-1}(\sigma)$. Applying $\tau_i$ to a labeling corresponds to swapping the vertices on positions $i$ and $i+1$ if those vertices are not adjacent in $G$ and doing nothing if they are adjacent in $G$. Thus, the entire process can be seen as a sequence of operations that move the vertices around the circle. Now imagine that we assign each vertex of $G$ a weight. At the beginning of the process (i.e., where we start with the labeling $\sigma$), every vertex has weight $0$. Whenever a vertex moves one space clockwise (respectively, counterclockwise) on the circle, we increase (respectively, decrease) its weight by $1$. Observe that the sum of the weights of all the vertices remains constant at $0$ throughout the process. Let $f(x)$ denote the final weight of the vertex $x$ at the end of the process (when we reach the labeling $\TPro^{n-1}(\sigma)$). Let \[F_+=\{x\in V:f(x)>0\},\quad F_-=\{x\in V: f(x)<0\}, \quad\text{and}\quad F_0=\{x\in V:f(x)=0\}.\]

We know by Lemma~\ref{Lem:FactoringTPro} that $\TPro^{n-1}=(c\Pro)^n$. Lemma~\ref{Lem:jdt2} tells us that there is a path $\mathcal P=(v_1,\ldots,v_r)$ in $G$ such that $\sigma(v_1)=1$ and such that $(c\Pro)^n(\sigma)=\jdt_{\mathcal P}(\sigma)$; note that $r\geq 2$ because we have assumed that $\TPro^{n-1}(\sigma)\neq \sigma$. This shows that during the process, each of the vertices $v_1,\ldots,v_r$ ends on a position that is different from the position where it started. Hence, none of the vertices $v_1,\ldots,v_r$ belong to $F_0$. Let us assume $v_r\in F_+$; the proof when $v_r\in F_-$ is virtually identical. Because $v_r$ and $v_{r-1}$ are adjacent in $G$, they can never swap positions with each other when we apply a toggle operator. Also, the final position of $v_{r-1}$ at the end of the process is $\jdt_{\mathcal P}(\sigma)(v_{r-1})=\sigma(v_{r})$. This forces $f(v_{r-1})>0$, so $v_{r-1}\in F_+$. Similarly, $v_{r-1}$ and $v_{r-2}$ can never swap positions when we apply a toggle operator, and the final position of $v_{r-2}$ at the end of the process is $\jdt_{\mathcal P}(\sigma)(v_{r-2})=\sigma(v_{r-1})$. This forces $f(v_{r-2})>0$, so $v_{r-2}\in F_+$. Continuing in this manner, we eventually find that $\{v_1,\ldots,v_r\}\subseteq F_+$. Since $r\geq 2$, the set $F_+$ is nonempty. But the sum of the weights of all the vertices at the end of the process is $0$, so $F_-$ must also be nonempty. 

Suppose $z\in F_-$. Every vertex in $\mathcal P$ belongs to $F_+$, so $z$ is not in $\mathcal P$. This means that $\TPro^{n-1}(\sigma)(z)=\jdt_{\mathcal P}(\sigma)(z)=\sigma(z)$, so the initial position of $z$ at the beginning of the process is the same as the final position of $z$ at the end of the process. Consequently, $f(z)$ is a negative multiple of $n$. Since $z$ moves (in net) a positive number of full rotations counterclockwise, it must cross paths with every vertex that (in net) stays still or moves clockwise. In other words, if $x\in V$ is any vertex satisfying $f(x)\geq 0$, then $z$ and $x$ must swap positions with each other at some time during the process, so $z$ and $x$ must not be adjacent in $G$. This argument shows that no vertex in $F_-$ can be adjacent in $G$ to any vertex in $F_0\cup F_+$. However, this contradicts the fact that $G$ is connected because the sets $F_-$ and $F_0\cup F_+$ are nonempty subsets of $V$ whose union is $V$. 
\end{proof}

Note that Propositions~\ref{Prop1} and~\ref{Prop2} imply the last statement of Theorem~\ref{Thm:Forest}; we are almost ready to complete the proof of the entire theorem. First, let us record a corollary of Proposition~\ref{Prop2}. 

\begin{corollary}\label{Cor:cPro}
If $G$ is a tree with $n$ vertices, then every orbit of $c\Pro\colon\Lambda_G\to\Lambda_G$ has size $n$. 
\end{corollary}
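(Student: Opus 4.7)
The plan has two parts. For the easy direction (orbit size divides $n$), I would apply Lemma~\ref{Lem:FactoringTPro} with $s=1$ and $k=0$ to obtain $\TPro^{n-1}=(c\Pro)^n$; by Proposition~\ref{Prop2}, the left-hand side is the identity on $\Lambda_G$ when $G$ is a tree, so $(c\Pro)^n=\mathrm{id}_{\Lambda_G}$ and every orbit of $c\Pro$ has size dividing $n$.

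For the lower bound, I would argue by contradiction. Suppose $(c\Pro)^d(\sigma)=\sigma$ for some $\sigma\in\Lambda_G$ and some $1\le d\le n-1$. Lemma~\ref{Lem:jdt2} provides a unique path $\mathcal P=(v_1,\ldots,v_r)$ with $\sigma(v_1)=1$ such that $(c\Pro)^d(\sigma)=\jdt_{\mathcal P}(\sigma)$, and since $\jdt_{\mathcal P}$ slides the label at $v_1$ to $v_r$, the equality $\jdt_{\mathcal P}(\sigma)=\sigma$ forces $r=1$. I would then adapt the ``motion on a circle'' framework used in the proof of Proposition~\ref{Prop2}: place the $n$ vertices of $G$ on a circle with $n$ marked positions (vertex $x$ starting at position $\sigma(x)$) and track each vertex's cumulative signed clockwise rotation through the $(c\Pro)^d$ process, which consists of $d$ rounds of the $n-1$ toggles $\tau_1,\ldots,\tau_{n-1}$ each followed by a cyclic shift $c$. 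Every vertex returns to its starting position and the $d$ cyclic shifts together contribute $+d$ to each vertex's motion, so the toggle-induced motion of vertex $x$ equals $n k_x - d$ for some integer winding number $k_x$; since the total toggle-induced motion is zero, $\sum_x k_x=d$. For $1\le d\le n-1$, pigeonhole forces both $F_+=\{x:k_x>0\}$ and $F_{\le 0}=V\setminus F_+$ to be nonempty.

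Following the argument in the proof of Proposition~\ref{Prop2}, the final step is to conclude that no edge of $G$ joins $F_+$ to $F_{\le 0}$, contradicting the connectedness of the tree. The intuition is that any $z\in F_+$ winds clockwise strictly faster than any $x\in F_{\le 0}$, so at some moment during the process $z$ and $x$ must occupy adjacent circle positions with the toggle between those positions being applied; since toggles swap only $G$-non-adjacent vertices, $z$ and $x$ must then be non-adjacent in $G$. The main obstacle is making this ``forced swap'' argument rigorous: a priori, $z$ could realize its winding entirely through swaps with vertices other than $x$ and so avoid $x$ altogether, so a careful bookkeeping of the number of times $z$ and $x$ occupy adjacent circle positions---directly analogous to the argument in the proof of Proposition~\ref{Prop2}---is needed to close this gap.
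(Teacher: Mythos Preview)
Your upper bound (every orbit size divides $n$) matches the paper's exactly. For the lower bound, however, the paper takes a much shorter structural route that avoids the circle argument entirely: a tree has a single flip equivalence class, so Theorems~\ref{ThmFriends1} and~\ref{ThmFriends2} give $n$ double-flip equivalence classes $D_1,\ldots,D_n$ with $c$ carrying $\mathcal L(D_i)$ bijectively onto $\mathcal L(D_{i+1})$ (indices mod $n$); since $\Pro$ preserves the induced acyclic orientation, $c\Pro$ likewise sends $\mathcal L(D_i)$ to $\mathcal L(D_{i+1})$, and hence every $c\Pro$-orbit has size divisible by $n$. That is the whole argument. Your plan instead adapts the winding-number machinery from the proof of Proposition~\ref{Prop2}; the gap you flag is real but closable: track the lifted difference $\Delta(t)=P_z(t)-P_x(t)\in\mathbb Z$, observe that $\Delta$ is never a multiple of $n$, changes by $0,\pm1,\pm2$ at each step, and changes by $\pm2$ only when $z$ and $x$ actually swap; since $k_z-k_x\ge 1$ forces $\Delta$ to increase by at least $n$ overall, $\Delta$ must jump over some multiple of $n$ via a $+2$ step, which is a swap, so $\{z,x\}\notin E$. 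Your route is more self-contained but re-derives by hand a divisibility that the friends-and-strangers framework supplies in one line; the paper's method is shorter and also explains \emph{why} the answer is $n$, namely that $c\Pro$ cycles through $n$ combinatorially distinguished blocks of $\Lambda_G$.
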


\begin{proof}
The proof is trivial if $n=1$, so we may assume $n\geq 2$. It is well known that a tree has exactly one flip equivalence class (see, e.g., \cite[Example~1.5]{Develin}). Therefore, it follows from Theorems~\ref{ThmFriends1} and~\ref{ThmFriends2} that $G$ has $n$ double-flip equivalence classes $D_1,\ldots,D_n$ such that $c$ is a bijection from $\mathcal L(D_i)$ to $\mathcal L(D_{i+1})$ for all $i$ (with indices taken modulo $n$). Because $\Pro$ preserves the acyclic orientation induced by a labeling (i.e., $\alpha_{\sigma}=\alpha_{\Pro(\sigma)}$), the operator $c\Pro$ also restricts to a bijection from $\mathcal L(D_i)$ to $\mathcal L(D_{i+1})$ for all $i$. It follows that every orbit of $c\Pro\colon\Lambda_G\to\Lambda_G$ has size divisible by $n$. On the other hand, we know by Lemma~\ref{Lem:FactoringTPro} that $(c\Pro)^n=\TPro^{n-1}$, and Proposition~\ref{Prop2} tells us that this is the identity map on $\Lambda_G$.  
\end{proof}

\begin{proof}[Proof of Theorem~\ref{Thm:Forest}]
Let $G$ be a forest with $n\geq 2$ vertices, and let $\sigma\in\Lambda_G$ be a labeling. Let $T=(V(T),E(T))$ be the connected component of $G$ containing $\sigma^{-1}(1)$, and let $t$ be the number of vertices in $T$. According to Proposition~\ref{Prop1}, the size of the orbit of $\sigma$ under $\TPro$ is $(n-1)s$ for some positive integer $s$; our goal is to prove that $s=t/\gcd(t,n)$. Since $\TPro^{n-1}=(c\Pro)^n$ by Lemma~\ref{Lem:FactoringTPro}, $s$ is the smallest positive integer such that $(c\Pro)^{ns}(\sigma)=\sigma$. 

Let $\ell$ be a positive integer. Lemma~\ref{Lem:jdt2} tells us that there is a path $\mathcal P=(v_1,\ldots,v_r)$ in $G$ such that $\sigma(v_1)=1$ and such that $(c\Pro)^{n\ell}(\sigma)=\jdt_{\mathcal P}(\sigma)$. The path $\mathcal P$ is contained in $T$, so, roughly speaking, applying $(c\Pro)^{n\ell}$ to $\sigma$ has the same effect as applying $(c\Pro)^{n\ell}$ to the restriction of $\sigma$ to $T$. To make this statement more precise, we need some notation. Let $B=\sigma(V(T))\subseteq[n]$, and let $\gamma$ be the unique order-preserving bijection from $B$ to $[t]$. Given a labeling $\mu\in\Lambda_G$, let $\mu\vert_T$ denote the restriction of $\mu$ to $V(T)$. Given a bijection $\beta\colon V(T)\to B$, let $\beta\vert^G$ denote the unique labeling in $\Lambda_G$ that agrees with $\beta$ on $V(T)$ and agrees with $\sigma$ on $V(G)\setminus V(T)$. Let $c_T$ be the long cycle $(1 \,\, 2\,\, 3\cdots t)$ in the symmetric group $S_t$, which we view as an operator on $\Lambda_T$. Let $\Pro_T\colon\Lambda_T\to\Lambda_T$ denote the promotion operator for the graph $T$. 
Then $\gamma\circ\sigma\vert_T\in\Lambda_T$, so
$\gamma^{-1}\circ(c_T\Pro_T)^{n\ell}(\gamma\circ\sigma\vert_T)$ is a bijection from $V(T)$ to $B$. The precise version of the rough statement given above is that $(\gamma^{-1}\circ(c_T\Pro_T)^{n\ell}(\gamma\circ\sigma\vert_T))\vert^G=(c\Pro)^{n\ell}(\sigma)$. It follows that $(c\Pro)^{n\ell}(\sigma)=\sigma$ if and only if $(c_T\Pro_T)^{n\ell}(\gamma\circ\sigma\vert_T)=\gamma\circ\sigma\vert_T$. Because $T$ is a tree with $t$ vertices, Corollary~\ref{Cor:cPro} tells us that $(c_T\Pro_T)^{n\ell}(\gamma\circ\sigma\vert_T)=\gamma\circ\sigma\vert_T$ if and only if $n\ell$ is divisible by $t$. Hence, $s$ is the smallest positive integer such that $ns$ is divisible by $t$, which is $t/\gcd(t,n)$. 
\end{proof}

\section{Future Directions}

It would be nice to have examples of graphs other than forests where toric promotion displays interesting behavior. It could also be interesting to find toric analogues of other appearances of promotion in the literature. For example, one might consider toric analogues of the promotion Markov chains from \cite{Ayyer, Poznanovic}. 

Given an $n$-vertex graph $G=(V,E)$ and a permutation $\pi=\pi_1\cdots\pi_n\in S_n$ written in one-line notation, let us define $\TPro_\pi\colon\Lambda_G\to\Lambda_G$ by $\TPro_\pi=\tau_{\pi_n}\tau_{\pi_{n-1}}\cdots\tau_{\pi_2}\tau_{\pi_1}$. Thus, $\TPro_{12\cdots n}$ is just the toric promotion operator from Definition~\ref{Def:ToricPro}. For $1\leq h\leq \left\lfloor n/2\right\rfloor$, let $\zeta^{(h)}$ denote the permutation $12\cdots (n-h) n(n-1)\cdots (n-h+1)$. In particular, $\zeta^{(1)}$ is the identity permutation $12\cdots n$. One can show that if $\pi\in S_n$, then $\TPro_\pi$ is conjugate in the symmetric group on $\Lambda_G$ to an operator of the form $\TPro_{\zeta^{(h)}}$ for some $1\leq h\leq\left\lfloor n/2\right\rfloor$. Hence, to understand the orbit structures of the operators $\TPro_\pi$, it suffices to understand the orbit structures of the operators $\TPro_{\zeta^{(h)}}$. Along these lines, we have the following conjecture when $G$ is a path; note that the $h=1$ case of this conjecture is a special consequence of Theorem~\ref{Thm:Forest}.

\begin{conjecture}
If $G$ is a path graph with $n$ vertices and $1\leq h\leq \left\lfloor n/2\right\rfloor$, then the operator $\TPro_{\zeta^{(h)}}\colon\Lambda_G\to\Lambda_G$ has order $h(n-h)$. 
\end{conjecture}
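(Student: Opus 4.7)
The plan is to follow the three-step template used to prove Theorem~\ref{Thm:Forest}: (i) factor a power of the operator in terms of a simpler ``sliding'' operator, (ii) give a jeu-de-taquin description on forests, and (iii) run a global winding-number argument on path graphs. Write $\TPro_{\zeta^{(h)}} = C_h A_h$, where $A_h = \tau_{n-h}\tau_{n-h-1}\cdots\tau_1$ is a ``bottom partial promotion'' (composition order: $\tau_1$ first) and $C_h = \tau_{n-h+1}\tau_{n-h+2}\cdots\tau_n$ is a ``top partial reverse promotion'' (composition order: $\tau_n$ first). Intuitively, a single application of $\TPro_{\zeta^{(h)}}$ drives label $1$ forward through the bottom labels via $A_h$ and then propagates a second sliding move through the top labels via $C_h$, with the initial $\tau_n$ in $C_h$ letting the top wave interact with label $1$'s new position.

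The first step is to find an operator $\Phi_h$ and an identity of the form $\TPro_{\zeta^{(h)}}^{n-h} = \Phi_h^n$ analogous to Lemma~\ref{Lem:FactoringTPro}. Using the commutations $c\tau_i c^{-1} = \tau_{i+1}$ from Lemma~\ref{Lem:EasyRelation}, one would try to telescope a product of $n-h$ copies of $\TPro_{\zeta^{(h)}}$ into $n$ cyclic conjugates of a product involving $A_h$ and $C_h$, mimicking the derivation $\TPro^{n-1} = (c\Pro)^n$ in the proof of Lemma~\ref{Lem:FactoringTPro}. On a forest $G$, the operator $\Phi_h$ should admit a jeu-de-taquin description analogous to Lemma~\ref{Lem:jdt2}: applying $\Phi_h$ slides one or two distinguished labels along uniquely determined paths in $G$ whose combined length totals $n$, so that $\Phi_h^n$ represents ``one full revolution'' of the sliders around the circle. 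With this in hand, the upper bound $\TPro_{\zeta^{(h)}}^{h(n-h)} = \mathrm{id}$ on a path graph should follow from a weight/winding argument like the one in the proof of Proposition~\ref{Prop2}: after $h(n-h)$ applications, the net winding of each vertex around the circle is determined by how the sliders traverse the graph, and any vertex with nonzero net winding that is not a slider would force a forbidden partition of $V(G)$ into subsets that cannot be joined by an edge. For the lower bound on the order, it should suffice to check that the identity labeling $\sigma(i) = i$ lies in an orbit of size exactly $h(n-h)$; this holds in the small cases I have verified.

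The principal obstacle is the factorization step. When $h = 1$, the proof of Lemma~\ref{Lem:FactoringTPro} succeeds because the toggles in $\TPro$ appear in the natural cyclic order $\tau_1,\tau_2,\ldots,\tau_n$, so conjugation by $c$ simply rotates them. For $h \geq 2$, the second wave $C_h$ is written in the reverse cyclic order $\tau_n,\tau_{n-1},\ldots,\tau_{n-h+1}$, so simple conjugation by powers of $c$ does not produce a clean telescope. Finding the right $\Phi_h$ will probably require combining cyclic conjugation with some kind of reversal that swaps the roles of the two waves, and verifying the resulting identity is where the main calculation lies. A secondary difficulty is identifying exactly which labels slide under $\Phi_h$: unlike in Lemma~\ref{Lem:jdt2}, where the slide involves only label $1$, the reversed top wave suggests that the description for $\Phi_h$ involves at least one additional label, and tracking how the sliding paths interact (particularly when they meet on a path graph) will force an extension of the uniqueness argument of Lemma~\ref{Lem:jdt2}.
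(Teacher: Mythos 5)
This statement is stated in the paper as a \emph{conjecture}; the paper offers no proof of it, so there is no argument of the author's to compare yours against. Your submission therefore has to stand entirely on its own, and as written it does not: it is a research plan whose central step is explicitly left unresolved. Everything in your outline hinges on producing an operator $\Phi_h$ together with a factorization identity of the shape $\TPro_{\zeta^{(h)}}^{\,n-h}=\Phi_h^{\,n}$ (or some variant), generalizing Lemma~\ref{Lem:FactoringTPro}. You correctly diagnose why the $h=1$ telescoping argument fails for $h\geq 2$ --- the toggles in your second wave $C_h$ occur in reversed cyclic order, so conjugation by powers of $c$ does not rotate the word $\TPro_{\zeta^{(h)}}$ into itself --- but you do not overcome this obstacle; you only name it. Without that identity, the subsequent jeu-de-taquin description and the winding-number argument cannot even be formulated, let alone carried out. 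Note also that the arithmetic is unexplained: in the $h=1$ case the chain is $\TPro^{n-1}=(c\Pro)^n$ together with the fact that $c\Pro$ has order $n$, which yields order exactly $n-1=h(n-h)$; for general $h$ you never say how the extra factor of $h$ in the claimed order $h(n-h)$ would emerge from your proposed factorization, and a single slid label making one revolution of the circle would naturally produce $n-h$ rather than $h(n-h)$.

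Two further gaps. First, the lower bound: verifying that the identity labeling has orbit size $h(n-h)$ ``in the small cases I have checked'' is not a proof, and you would in any case need an analogue of Proposition~\ref{Prop1} (divisibility of every orbit size by the relevant quantity) rather than a single example, since the order of the operator is the least common multiple of all orbit sizes. Second, even granting a factorization, the winding argument of Proposition~\ref{Prop2} uses in an essential way that exactly one label slides along a single path whose vertices all acquire weights of the same sign; with two interacting sliding waves the sign analysis and the final connectivity contradiction would both need to be reworked, and you give no indication of how. In short, the proposal identifies a plausible strategy and honestly flags where it breaks, but it does not constitute a proof of the conjecture.
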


More generally, one could consider operators of the form $\tau_{i_m,j_m}\cdots\tau_{i_2,j_2}\tau_{i_1,j_1}\colon\Lambda_G\to\Lambda_G$ for some indices $i_1,\ldots,i_m,j_1,\ldots,j_m$ with $i_\ell\neq j_\ell$ for all $\ell$. This level of generality is probably far too great to expect any noteworthy behavior, but it could be fruitful to explore whether there are specific cases that produce interesting results.

\section{Acknowledgments}
The author thanks Sam Hopkins, Alexander Postnikov, Tom Roby, and Martin Rubey for helpful discussions. The author was supported by a Fannie and John Hertz Foundation Fellowship and an NSF Graduate Research Fellowship (grant number DGE 1656466). The author also thanks the anonymous referees for very helpful comments that improved the presentation of this article.

\end{document}